\documentclass[12pt]{amsart}
\usepackage[margin=1in]{geometry}
\usepackage[english]{babel}
\usepackage[utf8]{inputenc}
\usepackage{subcaption}
\usepackage{amsmath}
\usepackage{amssymb}
\usepackage{amsfonts}
\usepackage{amsthm}
\usepackage{mathrsfs}
\usepackage[all]{xy}
\usepackage[pdftex]{graphicx}
\usepackage{color}
\usepackage{cite}
\usepackage{url}
\usepackage{cool}
\usepackage{indent first}
\usepackage[labelfont=bf,labelsep=period,justification=raggedright]{caption}
\usepackage[english]{babel}
\usepackage[utf8]{inputenc}
\usepackage{hyperref}
\usepackage{verbatim}
\usepackage[colorinlistoftodos]{todonotes}
\usepackage{tkz-fct}
\usepackage{booktabs}
\usepackage{tikz}
\usetikzlibrary{calc}
\usepackage{multicol}
\PassOptionsToPackage{dvipsnames,svgnames}{xcolor}
\usepackage{textcomp}

\renewcommand{\qedsymbol}{$\blacksquare$}

\def\multiset#1#2{\ensuremath{\left(\kern-.3em\left(\genfrac{}{}{0pt}{}{#1}{#2}\right)\kern-.3em\right)}}

\theoremstyle{plain}
\newtheorem{thm}{Theorem}
\newtheorem{lemma}[thm]{Lemma}
\newtheorem{cor}[thm]{Corollary}

\newtheorem*{unnumberedqn}{Question}

\theoremstyle{definition}

\theoremstyle{remark}

\numberwithin{equation}{section}
\numberwithin{thm}{section}

\newtheoremstyle{named}{}{}{\itshape}{}{\bfseries}{.}{.5em}{\thmnote{#3's }#1}
\theoremstyle{named}

\newtheoremstyle{named}{}{}{\itshape}{}{\bfseries}{.}{.5em}{\thmnote{#3's }#1}
\theoremstyle{named}

\catcode`,\active

\catcode`\,12

\begin{document}

\title{On Bounds and Diophantine Properties of Elliptic Curves}
\author{Navvye Anand}
\address{Euler Circle, Mountain View, CA 94040}
\email{navvye.anand@caltech.edu}

\date{\today}

\begin{abstract}
Mordell equations are celebrated equations within number theory and are named after Louis Mordell, an American-born British mathematician, known his for pioneering research in number theory. In this paper, we discover all Mordell equations of the form $y^2 = x^3 + k$, where $k \in \mathbb Z$, with exactly $|k|$ integral solutions. We also discover explicit bounds for Mordell equations, parameterized families of elliptic curves and twists on elliptic curves. Using the connection between Mordell curves and binary cubic forms, we improve the lower bound for number of integral solutions of a Mordell curve by looking at a pair of curves with unusually high rank. 
\end{abstract}
\maketitle
\section{Introduction.} Elliptic curves are smooth, projective curves of genus one of the form $y^2 = x^3 + Ax + B$. Mordell equations, are a subset of elliptic curves and take the form $y^2 = x^3 + k$. It is well known that the number of integral solutions for a Mordell equation $E: y^2 = x^3 + k$, denoted by $N(E)$ throughout this paper, is finite. Similarly, it is well known that the group of rational points on an elliptic curve is finitely generated. 

The finite nature of $N(E)$ for Mordell equations invites a natural inquiry into the precise determination of these solutions, and in this paper, we combine various explicit upper bounds for the number of integral points on a Mordell equation to find the Mordell equations of the form $E: y^2 = x^3 + k$ with $N(E) = |k|$, and generalize this Diophantine property to find the only cases where $N(E)$ is an integral multiple of $k.$ We demonstrate the following results.
\begin{itemize}
  \item \textbf{{Including the point at infinity:}}
    \begin{itemize}
      \item There are precisely three curves for which $N(E) = |k|$. These cases correspond to $k = 3, 8, 17$.
      \item There is only one curve for which $N(E) = |2k|$, corresponding to the case $k = -1$.
    \end{itemize}
  \item \textbf{{Excluding the point at infinity:}}
    \begin{itemize}
      \item There are precisely four curves for which $N(E) = |k|$. These cases correspond to $k = -1, -2, -4, 2$.
    \end{itemize}
\end{itemize}
In order to prove the aforementioned statements, we rely heavily on the connection between binary cubic forms and Mordell curves. The main theorem is proved by bounding the number of integral points on a Mordell curve by the $3$-part of the class number of the quadratic field $\mathbb{Q}(\sqrt k),$ denoted by $h_3(k)$ and then bounding the class number of quadratic field using the explicit version of Dirichlet's class number formula. Additionally, we also find explicit bounds for the number of integral points on well defined \emph{twists} of elliptic curve, and parameterized families of elliptic curves. We improve the state of the art lower bound for number of integral solutions for families of Mordell curves by exploiting this very relation. Lastly, we propose generalizations of Diophantine properties and discuss explicit bounds in literature. \\  \\ 
Before we delve into the paper, we provide a brief description of the state of the art bounds regarding elliptic curves. \noindent 
{Helfgott and Venkatesh} \cite{helfgott2005integral} proposed a novel approach to bounding \( E(K,S) \) by invoking the best sphere-packing results given by {Kabatjanskii and Levenshtein} \cite{KabLev78}. They overcame the  sensitivity to the rank of the Mordell-Weil lattice by exploring the geometry of high-dimensional Euclidean spaces, where packing problems exhibit a weak dependence on the dimension, and thereby improved upon previous bounds on elliptic curves, breaking the $O(\mathrm{|Disc(E)}|^{0.5})$ barrier.
{Bhargava et al} \cite{bhargava2017bounds} improved upon this bound and proved  $$N(E) = O(\left| \text{Disc}(E) \right|^{0.1117... + \varepsilon}).$$ The above-mentioned bound was further improved upon
by {Alpoge and Ho} \cite{alpöge2022second}. Their main result states that $$ N(E) = 
O\left(2^{\operatorname{rank}\left(E_{A, B}\right)} \prod_{p^2 \mid \Delta_{A, B}} \min \left(4\left\lfloor\frac{\nu_{p}\left(\Delta_{A, B}\right)}{2}\right\rfloor+1,7^{2^7}\right)\right)
$$
where $\nu_{p}(n)$ is the greatest nonnegative integer such that $p^{\nu_{p}(n)} \mid n$. Since the number of primes dividing $n$ has maximal order $O((\log n) / \log \log n)$ and normal order $O(\log \log n)$, this bound considerably improves upon the one by {Bhargava et al} \cite{bhargava2017bounds}. Moreover, if we assume the rank upper bound conjecture, which states that there exists an absolute constant $c>0$ such that $\operatorname{rank}\left(E\right)<c$ for all elliptic curves, the rank contribution is proven to be negligible \cite{458342}. It's also important to note that the{ Helfgott-Venkatesh bound} \cite{helfgott2005integral}
$$
N(E) \le  \left(e^{O\left(\omega\left(\Delta_{A, B}\right)\right)} 1.33^{\operatorname{rank}\left(E_{A, B}\right)}\left(\log \left|\Delta_{A, B}\right|\right)^2 \right)
$$
where, $\omega(n)$ is the number of distinct prime factors of $n$, might be stronger than Alpöge and Ho's, depending on the prime factorization of $\Delta_{A, B}$. We also note the rather interesting bound provided by {Bennett} \cite{Bennett_2015}, which states that if $F(x, y)$ is a homogeneous cubic polynomial with integral coefficients and nonzero discriminant and $m$ is a nonzero integer, then the equation $F(x, y) = m$ possesses at most $10 \times 3^{\omega(m)}$ solutions in coprime integers $x$ and $y$ where $\omega(m)$ is the number of distinct prime factors of $m$. We now prove that there exist only finitely many Mordell equations of the form $y^2 = x^3 + k $ with $N(E) = |k|$
\begin{thm}
    There exist only finitely many elliptic curves $E: y^2 = x^3 + k$ such that $N(E)=|k|$.
\end{thm}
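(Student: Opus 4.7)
The plan is to show that $N(E)$ grows strictly slower than $|k|$ as $|k| \to \infty$, so that the equation $N(E) = |k|$ can only have finitely many solutions. Concretely, I would establish an upper bound of the shape $N(E) = O\!\left(\sqrt{|k|}\,\log|k|\right)$, which is $o(|k|)$, and conclude that there exists an effectively computable $K_0$ such that $N(E) < |k|$ whenever $|k| > K_0$. Finiteness then follows immediately, since only $k$ with $|k| \leq K_0$ remain as candidates.

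The first step is to invoke the classical correspondence between integral points on the Mordell curve $y^2 = x^3 + k$ and binary cubic forms (equivalently, ideal classes) associated with the quadratic field $\QQ(\sqrt{k})$. Following the strategy outlined in the introduction, this correspondence yields a bound of the form
\[
N(E) \;\leq\; c_1 \cdot 3^{\omega(k)} \cdot h_3(k),
\]
where $h_3(k)$ denotes the $3$-part of the class number of $\QQ(\sqrt{k})$, $\omega(k)$ is the number of distinct prime divisors of $k$, and $c_1$ is an absolute constant. The factor $3^{\omega(k)}$ accounts for the ambiguity of representing an integer by inequivalent classes of cubic forms, while the factor $h_3(k)$ controls how many inequivalent forms of the relevant discriminant exist.

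The second step is to bound $h_3(k)$. Using the trivial estimate $h_3(k) \leq h(k)$ together with the explicit version of Dirichlet's class number formula, namely
\[
h(k) \;=\; \frac{\sqrt{|d_k|}}{\pi}\, L(1,\chi_k) \quad\text{(in the imaginary case)}
\]
and its analogue in the real case, combined with the standard upper bound $L(1,\chi_k) = O(\log|k|)$, one obtains $h(k) = O(\sqrt{|k|}\,\log|k|)$. Since $3^{\omega(k)} = |k|^{o(1)}$ (as $\omega(k) = O(\log|k|/\log\log|k|)$), substituting back gives
\[
N(E) \;=\; O\!\left(|k|^{1/2 + o(1)}\right) \;=\; o(|k|),
\]
which completes the asymptotic argument.

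The conceptually straightforward part is the asymptotic inequality; the main obstacle I foresee is turning this finiteness result into the explicit enumeration $k \in \{3,8,17\}$ claimed in the introduction. That requires effective (rather than asymptotic) constants in each of the two steps above, a careful treatment of the correction factor $3^{\omega(k)}$ for highly composite $k$, and a finite computer search to dispose of the remaining range $|k| \leq K_0$. For the present theorem, however, only the qualitative statement $N(E) = o(|k|)$ is needed, so the argument above suffices.
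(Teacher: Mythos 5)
Your proof is correct, but it takes a genuinely different route from the one the paper uses for this particular theorem. The paper's own proof is a two-line application of the Bhargava et al.\ bound $N(E) = O_{\varepsilon}(|\mathrm{Disc}(E)|^{0.1117+\varepsilon})$: since $\mathrm{Disc}(E) = -432k^2$ for a Mordell curve, this gives $N(E) = O_{\varepsilon}(|k|^{0.2234+\varepsilon}) = o(|k|)$, and finiteness follows. You instead go through the binary-cubic-form correspondence, bound $N(E)$ by (essentially) the $3$-part of a class number, and then control that by Dirichlet's class number formula and $L(1,\chi) = O(\log |k|)$, arriving at $N(E) = O(|k|^{1/2+o(1)})$. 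Both arguments are valid; yours yields a weaker exponent ($1/2+o(1)$ versus roughly $0.2234$), but it has the decisive advantage of being effective, which is exactly why the paper abandons its own Theorem~1.1 argument and switches to your strategy in Section~2 to carry out the explicit enumeration $k \in \{3,8,17\}$. In other words, you have essentially written the proof of the paper's \emph{main} theorem rather than of this preliminary one. Two small points of bookkeeping: the factor $3^{\omega(k)}$ you insert is not needed in the form of the bound the paper actually uses (Bennett's lemma gives $N(E) \le 10\,h_3(-108k)$ directly, with the $3^{\omega}$ factor appearing only in his separate bound for representations $F(x,y)=m$ with $m$ not necessarily $1$), and the relevant quadratic field is $\QQ(\sqrt{-3k})$ rather than $\QQ(\sqrt{k})$ until one applies Scholz's reflection principle; neither issue affects the $o(|k|)$ conclusion.
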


\begin{proof}
Let us denote the discriminant of an elliptic curve $y^2 = x^3 + ax + k$ by $\mathrm{Disc}(E)  = -16(4a^3 + 27k^2).$ Now, since $a = 0$, therefore the discriminant is simply $\mathrm{Disc}(E) = -432k^2$. As demonstrated by {Bhargava et al}. \cite{bhargava2017bounds}, the number of integral points for any elliptic curve $E$ over $\mathbb Q$ in Weierstrass form with integral coefficients is at most $O_{\varepsilon}\left( |\mathrm{Disc}(E)|^{0.1117 + \varepsilon} \right)$ which implies that $N(E) = O_{\varepsilon}\left( |-432k^2|^{0.1117 + \varepsilon} \right)$. Clearly, $\lim \limits_{k \to \infty} \frac{|k|}{N(E)} = \infty$. Hence there are only finitely many cases where $N(E)  \ge |k|$ and hence only finitely many cases where $N(E) = |k|$.

\end{proof}
However, the aforementioned bounds don't allow us to explicitly compute all $k$ such that $N(E)=|k|$.
Therefore, we turn our attention to binary cubic forms, which serve as important tools while dealing with explicit upper bounds for 
$N(E)$. 
\section{Binary Cubic Forms and Explicit Bounds for $N(E)$} \label{BinaryCubic}
\subsection{Binary Cubic Forms}
\begin{thm}
    There exists a correspondence between the set of integral solutions $S_k=\left\{\left(X_1, Y_1\right), \ldots,\left(X_{N_k}, Y_{N_k}\right)\right\}$ for the Mordell equation $Y^2 = X^3 + k$ and the set  $T_k$ of triples $(F,x,y)$ where $F$ is a binary cubic form of the shape $ax^3 + 3bx^2y + 3cxy^2 + dy^3$ with discriminant $-108k$ and with integers $x,y$ satisfying $F(x,y) =1$. Furthermore, there exists a bijection between $T_k$ and $S_k$ under the actions of $SL_2(\mathbb Z) $ and $GL_2(\mathbb Z)$.
\end{thm}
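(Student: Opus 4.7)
The plan is to construct explicit mutually inverse maps between $S_k$ and $T_k$ by finding a canonical normal form for each triple $(F, x, y)$.

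For the forward direction, to each $(X, Y) \in S_k$ I would assign the triple $(F_{X,Y}, 1, 0)$ where
\[
F_{X,Y}(x, y) \,:=\, x^3 - 3X x y^2 + 2Y y^3.
\]
This is a binary cubic form of the required shape with $(a, b, c, d) = (1, 0, -X, 2Y)$. Dehomogenizing to the depressed cubic $f(x) = x^3 - 3Xx + 2Y$ and using $\mathrm{disc}(x^3 + px + q) = -4 p^3 - 27 q^2$, one computes
\[
\mathrm{disc}(F_{X,Y}) \,=\, -4(-3X)^3 - 27(2Y)^2 \,=\, 108(X^3 - Y^2) \,=\, -108k.
\]
Since $F_{X,Y}(1, 0) = 1$, the triple $(F_{X,Y}, 1, 0)$ indeed lies in $T_k$.

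For the reverse direction, let $(F, x_0, y_0) \in T_k$ with $F = a x^3 + 3b x^2 y + 3 c x y^2 + d y^3$. Because $F$ has integer coefficients and $F(x_0, y_0) = 1$, the pair $(x_0, y_0)$ is coprime, hence is the first column of some $M \in SL_2(\mathbb{Z})$. The action of $M^{-1}$ sends $(x_0, y_0)$ to $(1, 0)$ and forces the new leading coefficient to equal $F(x_0, y_0) = 1$. The shear $\begin{pmatrix} 1 & -b \\ 0 & 1 \end{pmatrix} \in SL_2(\mathbb{Z})$ stabilizes $(1, 0)$ and cancels the $x^2 y$ coefficient, reducing $F$ to the canonical form
\[
F' \,=\, x^3 + 3 c' x y^2 + d' y^3.
\]
The discriminant relation $\mathrm{disc}(F') = -108k$ simplifies to $(d')^2 = 4(k - (c')^3)$, so $d'$ is automatically even. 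Setting $X := -c'$ and $Y := d'/2$ produces a point $(X, Y) \in S_k$, since $Y^2 = (d')^2/4 = k - (c')^3 = k + X^3$.

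The main obstacle is verifying well-definedness on orbits. I would show that the $SL_2(\mathbb{Z})$-stabilizer of the normalized data, namely ``base point $(1, 0)$ together with vanishing $x^2 y$-coefficient,'' is trivial: any nontrivial shear reintroduces a nonzero $x^2 y$-coefficient, so the normalization pins down $(c', d')$ uniquely and hence $(X, Y)$. Mutual inversion then falls out by inspection, since the forward map applied to $(X, Y) = (-c', d'/2)$ regenerates $F'$ exactly. Finally, the $GL_2(\mathbb{Z})$-action extends the $SL_2(\mathbb{Z})$-action by the involution $\mathrm{diag}(1, -1)$, which fixes $(1, 0)$ but sends $d' \mapsto -d'$ and hence $Y \mapsto -Y$; thus $SL_2(\mathbb{Z})$-orbits correspond bijectively to $S_k$ with signs of $Y$ recorded, while $GL_2(\mathbb{Z})$-orbits identify $(X, Y)$ with $(X, -Y)$, accounting for both group actions appearing in the statement. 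I expect the bookkeeping between these two quotients and the normalization procedure to be the subtlest part, but it remains an elementary verification.
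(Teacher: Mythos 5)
Your proposal is correct in substance, but it takes a genuinely different route from the paper. The paper follows Bennett's covariant-theoretic argument: it introduces the Hessian $H$ and the Jacobian $G$ of $F$, invokes the syzygy $4H^3 = G^2 + 27 D F^2$, and sends a triple $(F,x_0,y_0)$ to the point $\bigl(H_1(x_0,y_0),\, G_1(x_0,y_0)/2\bigr)$; the inverse and the injectivity on orbits are only asserted. You instead bypass the covariants entirely and argue by normal form: since $F(x_0,y_0)=1$ forces $\gcd(x_0,y_0)=1$, you move the base point to $(1,0)$, which makes the form monic, kill the $x^2y$-coefficient by an integral shear (legitimate precisely because the leading coefficient is $1$), and read the point $(X,Y)=(-c',d'/2)$ off the reduced form $x^3+3c'xy^2+d'y^3$, with the parity of $d'$ forced by the discriminant relation $(d')^2 = 4(k-(c')^3)$; your forward map $F_{X,Y}=x^3-3Xxy^2+2Yy^3$ is visibly inverse to this. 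Your discriminant computations agree with the paper's normalization of $D_F$, and the shape $ax^3+3bx^2y+3cxy^2+dy^3$ is indeed $\GL_2(\ZZ)$-stable, so the shear stays within the admissible class. What each approach buys: the covariant/syzygy machinery is what Bennett needs downstream (it ties solutions to representations of $1$ by forms in a fixed discriminant class, hence to $h_3$), whereas your reduction is more elementary and, importantly, actually proves the well-definedness and injectivity on orbits that the paper waves at (``achieved by constructing a contradiction''). Your closing observation is also a genuine sharpening of the loosely worded statement: the clean bijection with $S_k$ is at the level of $\SL_2(\ZZ)$-orbits, while $\GL_2(\ZZ)$-orbits identify $(X,Y)$ with $(X,-Y)$; if you write this up, state that distinction explicitly rather than leaving both group actions in one sentence as the paper does. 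The only items left to your ``bookkeeping'' are routine: independence of the choice of matrix completing $(x_0,y_0)$ (any two differ by a unipotent stabilizer of $(1,0)$, which your shear normalization absorbs) and the action of $-I$, which negates the form and is handled by renormalizing the leading coefficient.
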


We give a brief sketch of the proof outlined by {Bennett} \cite{Bennett_2015}. 
\begin{proof}

 Let \begin{equation*}F = F(x,y) = ax^3 + 3bx^2y + 3cxy^2 + dy^3 \end{equation*} be a binary cubic form with the discriminant \begin{equation*} D_{F} = -27(a^2d^2 - 6abcd -  3b^2c^2 + 4ac^3 + 4b^3d) \end{equation*} We observe the fact that the set of the binary cubic forms of the shape $F$ is closed within the larger set of binary cubic forms of the set $Z[x,y]$ under the action of both $SL_{2}$ and $GL_{2}$. Now, describe the Hessian of the $F$ to be
\begin{equation*}
H=H_F(x, y)=-\frac{1}{4}\left(\frac{\partial^2 F}{\partial x^2} \frac{\partial^2 F}{\partial y^2}-\left(\frac{\partial^2 F}{\partial x \partial y}\right)^2\right)
\end{equation*}
and the Jacobian determinant of $F$ and $H$, a cubic form $G=G_F$ defined as
\begin{equation*}
G=G_F(x, y)=\frac{\partial F}{\partial x} \frac{\partial H}{\partial y}-\frac{\partial F}{\partial y} \frac{\partial H}{\partial x} .
\end{equation*} Now, we have 
\begin{equation*}
H / 9=\left(b^2-a c\right) x^2+(b c-a d) x y+\left(c^2-b d\right) y^2
\end{equation*}
and 
\begin{equation*}
G / 27=a_1 x^3+3 b_1 x^2 y+3 c_1 x y^2+d_1 y^3,
\end{equation*}
where
\begin{equation*}
a_1=-a^2 d+3 a b c-2 b^3, \quad b_1=-b^2 c-a b d+2 a c^2, \quad c_1=b c^2-2 b^2 d+a c d, \quad d_1=-3bcd+2c^3+ad^2.
\end{equation*}

These covariants satisfy the syzygy 
\begin{equation*}
4 H(x, y)^3=G(x, y)^2+27 D F(x, y)^2 .
\end{equation*}

Defining $D_1=D / 27, H_1=H / 9$ and $G_1=G / 27$, we get
\begin{equation*}
4 H_1(x, y)^3=G_1(x, y)^2+D_1 F(x, y)^2 .
\end{equation*}

We note that if $\left(x_0, y_0\right)$ satisfies the equation $F\left(x_0, y_0\right)=1$ and $D_1 \equiv 0 (\mod 4)$ then necessarily $G_1\left(x_0, y_0\right) \equiv 0 (\mod 2)$. We may therefore conclude that $Y^2=X^3+k$, where
\begin{equation*}
X=H_1\left(x_0, y_0\right), \quad Y=\frac{G_1\left(x_0, y_0\right)}{2} \quad \text { and } \quad k=-\frac{D_1}{4}=-\frac{D}{108} .
\end{equation*}

It follows that, to a given triple $\left(F, x_0, y_0\right)$, where $F$ is a cubic form of the shape $ax^3 + 3bx^2y + 3cxy^2 + dy^3$ with discriminant $-108 k$, and $x_0, y_0$ are integers for which $F\left(x_0, y_0\right)=1$, we can associate an integral point on the Mordell equation $Y^2=X^3+k$.
The converse of this can be proven easily by taking the covariants of the factors to be \begin{equation*}
X=\frac{G_1(1,0)}{2}=\frac{G(1,0)}{54} \text { and } Y=H_1(1,0)=\frac{H(1,0)}{9} \end{equation*}
The proof of bijection between $T_k$ and $S_k$ under the action of $GL_2(\mathbb Z)$ and $SL_2(\mathbb Z)$ is achieved by constructing a contradiction.
\end{proof}
We now state an important result (without proof) discovered by {Bennett}  \cite{b1b163cc-e48b-3585-a6ac-6a99c5137c3e}.
\begin{lemma}
    If $k$ is a nonzero integer, then the equation $$ y^2 = x^3 + k$$ has at most $10 h_3(-108k)$ solutions in integers $x,y$ where $h_3(-108k)$ is the class number of the binary cubic forms with discriminant $-108k$, which is also referred to as the 3-part of class number of the quadratic field $\mathbb{Q}(\sqrt{-108k}) = \mathbb{Q}(\sqrt{-3k})$. 
\end{lemma}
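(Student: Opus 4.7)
The plan is to combine the bijection from the preceding theorem with two separate inputs: a count of equivalence classes of cubic forms of the prescribed shape and discriminant, and a Thue-type bound on representations of $1$ by a single such form.

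First, by the preceding theorem, $N(E) = |S_k|$ is in bijection with the set of $\GL_2(\ZZ)$-equivalence classes of triples $(F, x_0, y_0)$, where $F = ax^3 + 3bx^2y + 3cxy^2 + dy^3$ has discriminant $-108k$ and $F(x_0, y_0) = 1$. Grouping the triples by the $\GL_2(\ZZ)$-class of the form alone, I would write
\[
N(E) \;=\; \sum_{[F]} \#\bigl\{\,\GL_2(\ZZ)\text{-classes of } (x_0, y_0) \in \ZZ^2 \text{ with } F(x_0, y_0) = 1\,\bigr\},
\]
where the outer sum runs over $\GL_2(\ZZ)$-equivalence classes of forms $F$ of the prescribed shape and discriminant.

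Second, I would identify the outer index set with the 3-part of the class group. By the classical Delone--Faddeev correspondence (refining work of Gauss and Eisenstein), integer binary cubic forms of the shape $ax^3 + 3bx^2y + 3cxy^2 + dy^3$ with discriminant $-108k$, modulo $\GL_2(\ZZ)$-equivalence, are in bijection with the 3-torsion in the class group of the quadratic order attached to $\QQ(\sqrt{-3k})$. Hence the outer sum has exactly $h_3(-108k)$ terms.

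Third, and this is the crux, I would bound by $10$ the number of $\GL_2(\ZZ)$-inequivalent integer pairs $(x_0, y_0)$ with $F(x_0, y_0) = 1$ for a single fixed form $F$. This is a Thue-type bound due to Bennett: after reducing $F$ to a canonical representative, one combines Pad\'e/hypergeometric approximation to the roots of $F(x,1)$ with an explicit case analysis for forms of small discriminant, yielding the uniform bound of $10$ independent of $F$. Multiplying the two bounds gives $N(E) \le 10\, h_3(-108k)$, as claimed. The main obstacle is securing this uniform constant in the Thue step: producing a bound independent of both $F$ and $k$ requires a delicate mix of Diophantine approximation and explicit enumeration of exceptional forms, and it is here that Bennett's argument concentrates most of its effort.
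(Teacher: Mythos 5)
The paper states this lemma without proof, citing Bennett, so there is no in-paper argument to compare against; your sketch is a correct reconstruction of the architecture of the cited proof (essentially Bennett--Ghadermarzi's argument). Your decomposition is the right one: the preceding theorem identifies integral points with $\GL_2(\ZZ)$-classes of triples $(F,x_0,y_0)$; grouping by the class of $F$ gives at most $h_3(-108k)$ outer terms, and here no correspondence with the quadratic class group is actually needed, since the statement \emph{defines} $h_3(-108k)$ as the class number of cubic forms of that discriminant (the correspondence you invoke is in any case the Eisenstein--Arndt--Hasse one relating cubic form classes to $3$-torsion of quadratic class groups, rather than Delone--Faddeev, which parametrizes cubic rings); and the inner count is bounded by the number of integer solutions of $F(x_0,y_0)=1$ for a fixed representative, which Bennett's representation-of-unity theorem bounds by $10$ uniformly over forms of nonzero discriminant. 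The paper itself quotes this last input in the introduction in the form $10\cdot 3^{\omega(m)}$ for $F(x,y)=m$, which specializes to $10$ at $m=1$. You correctly identify that all of the difficulty is concentrated in that uniform constant: it rests on hypergeometric/Pad\'e approximation together with an explicit enumeration of exceptional small-discriminant forms, and treating it as a black box is exactly what the paper does.
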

\subsubsection{{Class Number Calculations}}
We work towards proving the main result of the paper by noting that Scholz's reflection principle gives us the bound $h_3(-3k) \le h_3(k) + 1.$ Now, since we have the trivial bound $h_3(k) + 1 \le h(k) + 1$, we can bound $h_3(-3k)$ by finding explicit bounds for $h(k)$, which can be done by employing Dirichlet's version of the class number formula, as shown below: 
$$ h(k) = \begin{cases} 
\dfrac{w \sqrt{|k|}}{2\pi} L(1, x), & \text{if } k < 0; \\[1em] 
\dfrac{\sqrt{k}}{\ln \varepsilon} L(1, x), & \text{if } k > 0.
\end{cases} 
$$
where $w$ is the number of automorphisms of quadratic forms of discriminant $k$, $\varepsilon$ is the fundamental unit of the quadratic field $\mathbb{Q} (\sqrt{k})$, and $L(1, \chi)$ is the Dirichlet $L$ function $\sum_{n=1}^{\infty} \frac{\chi(n)}{n}$.
\\  \\ Now in order to achieve effective bounds, we shall divide $k$  into two cases, $k > 0 $ and $k < 0$. 
Let us define $\Delta$ to be the discriminant of a real quadratic field $\mathbb{Q}(\sqrt{k})$ such that $\Delta = \begin{cases} k & \text{ if } k \equiv 1 \pmod 4 \\ 4k & \text{ if } k \not\equiv 1 \pmod 4
    \end{cases}.$ Now, {Maohua Le (Zhanjiang)} \cite{Le1994} proved that for any $k \in \mathbb N$, where $k$ is square-free, we have $h(k) \le \left\lfloor \frac{\sqrt \Delta}{2} \right\rfloor$. Since $h_3(k) \le h(k)$, we get $h_3(k) \le \sqrt{k} + 1 \implies N(E) \le 10 (\sqrt{|k|} + 1).$ But, since we want $N(E) = |k|$, we get the trivial inequality $|k| \le 10 (\sqrt{|k|}  + 1) \implies |k| \le 119$. Now for imaginary quadratic fields, the case is a bit trickier, but luckily, we utilize a combination of bounds to achieve our desired result. We begin by noting that $$w = \begin{cases} 2 & \text{ when }k < -4 \\ 
    4 & \text{ when } k = 4 \\ 6 & \text{ when } k = -3\end{cases} \implies h(k) = \dfrac{|k|^{1/2} L(1, \chi)}{\pi} \text{ for } k < -4.$$ We now introduce  a very important lemma which introduces explicit upper bounds for $L(1, \chi)$ as discovered by Louboutin \cite{Stephane}. 

\begin{lemma}\label{Louboutin}
    Let $\chi$ be a Dirichlet character modulo $q$ with conductor $f$. Then, if $\chi$ is even
\[
|L(1,\chi)| \leq \frac{1}{2}\log f + c_1 \quad \text{with} \quad c_1 = \left(2 + \gamma - \frac{\log(4\pi)}{2}\right) = 0.023\ldots 
\]  and if $\chi$ is odd, then \[
|L(1,\chi)| \leq \frac{1}{2}\log f + c_2 \quad \text{with} \quad c_2 = \frac{\left(2 + \gamma - \log \pi \right)}{2} = 0.716.
\]

\end{lemma}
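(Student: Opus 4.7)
The plan proceeds in three stages: derive a closed-form expression for $L(1,\chi)$ as a finite sum, exploit the parity of $\chi$ to symmetrize, and then apply Abel summation to extract the claimed bound.

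First, using the periodicity of $\chi$ modulo $f$, I would write
$$L(s,\chi) = f^{-s}\sum_{a=1}^{f-1}\chi(a)\,\zeta(s, a/f),$$
where $\zeta(s,\alpha)$ is the Hurwitz zeta function. Its Laurent expansion $\zeta(s,\alpha) = (s-1)^{-1} - \psi(\alpha) + O(s-1)$ has a simple pole whose residue is independent of $\alpha$, so the fact that $\sum_{a=1}^{f-1}\chi(a) = 0$ for non-principal $\chi$ causes the pole to cancel as $s \to 1$, yielding the finite-sum formula
$$L(1,\chi) = -\frac{1}{f}\sum_{a=1}^{f-1}\chi(a)\,\psi(a/f).$$

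Second, I would split into cases by pairing $a$ with $f-a$. Since $\chi(f-a) = \chi(-1)\chi(a)$, combining terms and applying the reflection identities for $\psi$ (namely $\psi(x)-\psi(1-x) = -\pi\cot(\pi x)$ and the series identity relating $\psi(x)+\psi(1-x)$ to $\log\sin(\pi x)$ via $\Gamma(x)\Gamma(1-x) = \pi/\sin(\pi x)$) produces two closed forms: for even $\chi$, the Kronecker-type expression
$$L(1,\chi) = -\frac{1}{f}\sum_{a=1}^{f-1}\chi(a)\log\!\bigl(2\sin(\pi a/f)\bigr),$$
and for odd $\chi$, an analogous expression involving $\cot(\pi a/f)$ (which, via the functional equation, is equivalent to a generalized Bernoulli number $B_{1,\bar\chi}$). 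With the finite sum in hand, the bound follows from Abel summation against partial character sums $S_\chi(t) = \sum_{n \le t}\chi(n)$: the weights $\log(2\sin\pi x)$ and $\cot(\pi x)$ are monotone away from their endpoint singularities and have integrable absolute value on $[0,1]$, so pairing with P\'olya--Vinogradov yields a main term of $\tfrac{1}{2}\log f$, with the factor $\tfrac12$ emerging because after symmetrization only half of $[0,1]$ is summed over and the logarithmic singularity contributes only once.

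The hardest part is pinning down the \emph{explicit} constants $c_1 = 2+\gamma-\tfrac{1}{2}\log(4\pi)$ and $c_2 = \tfrac12(2+\gamma-\log\pi)$. The asymptotic $|L(1,\chi)| = \tfrac{1}{2}\log f + O(1)$ already follows from a fairly crude estimation, but extracting the optimal $O(1)$ term requires Louboutin's refinement: inserting a smooth cutoff and carrying the Stirling-approximation residues of $\log\Gamma$ through the functional equation, which is precisely what produces the $\log(4\pi)$ and $\log\pi$ contributions as well as the $\gamma$ and $2$ terms. Since this lemma is used only as a black box in our main theorem, I would verify the constants numerically for a range of small conductors $f$ as a sanity check and defer to Louboutin's original argument for the sharp constants in full generality.
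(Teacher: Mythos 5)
The paper does not prove this lemma at all: it is quoted from Louboutin \cite{Stephane} and used as a black box, so any proof attempt is necessarily a departure from the paper. Judged on its own terms, your sketch has a genuine gap at its central step. The finite-sum formula $L(1,\chi) = -\frac{1}{f}\sum_{a=1}^{f-1}\chi(a)\psi(a/f)$ is correct, but the even-character closed form you extract from it is missing the Gauss-sum factor: Gauss's digamma theorem turns $\psi(a/f)$ into a cosine sum, and summing $\chi(a)\cos(2\pi na/f)$ over $a$ produces $\tau(\chi)\bar\chi(n)$ with $|\tau(\chi)|=\sqrt{f}$, so the correct identity is $L(1,\chi) = -\frac{\tau(\chi)}{f}\sum_{n}\bar\chi(n)\log\bigl(2\sin(\pi n/f)\bigr)$. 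Bounding that by absolute values gives $O(\sqrt{f})$, not $\frac12\log f+O(1)$, and Abel summation against P\'olya--Vinogradov on the same sum gives roughly $\log^2 f$; meanwhile the elementary alternative (truncate $\sum\chi(n)/n$ at $n=f$ and control the tail by partial summation) gives $\log f+O(1)$, i.e.\ the wrong leading coefficient. The factor $\frac12$ is the entire content of Louboutin's theorem, and it does not come from ``symmetrization halving the interval'': it comes from the functional equation via the theta-integral representation of the completed $L$-function $(f/\pi)^{(s+e)/2}\Gamma\bigl(\tfrac{s+e}{2}\bigr)L(s,\chi)$ with $e\in\{0,1\}$ the parity, whose incomplete-Gamma tails are precisely what produce the $\gamma$, $\log\pi$, and $\log(4\pi)$ in the constants. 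Since you defer exactly that step to Louboutin, the proposal as written establishes nothing beyond the citation the paper already makes, while asserting an intermediate mechanism that would not deliver the stated main term.

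One point in your favour: the numerical sanity check you propose would have exposed a typo in the statement itself. As printed, $c_1 = 2+\gamma-\frac{\log(4\pi)}{2}\approx 1.31$, not $0.023$; the intended constant, parallel to $c_2$, is $c_1=\frac{1}{2}\bigl(2+\gamma-\log(4\pi)\bigr)=0.023\ldots$.
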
Replacing $f$ with $q$ in the above-mentioned bound, we get \begin{equation*}
L(1, \chi) \leq 
\begin{cases} 
\frac{1}{2} \log q + 0.023 & \text{if $\chi$ is even}, \\
\frac{1}{2} \log q + 0.716 & \text{if $\chi$ is odd}.
\end{cases}
\end{equation*}
Now, since $h_3(-3k) \le h_3(k) + 1 \le h(k) + 1$ and $h(k) \le \frac{|k|^{1/2}}{\pi}(0.5 \log |k| +0.716),$ we have $h_3(-3k) \le \frac{|k|^{1/2}}{\pi}(0.5 \log |k| +0.716) + 1 \implies$ $N(E) \le 10 \left( \frac{|k|^{1/2}}{\pi}(0.5 \log |k| +0.716) + 1 \right).$ But since we want $N(E) = |k|$, we must have $|k| \le 10 \left( \frac{|k|^{1/2}}{\pi}(0.5 \log |k| +0.716) + 1 \right)$, which only holds till $|k| =116$. By taking the union of both our results, we realize that we only need to check cases till $|k| = 119$, which is a trivial computational task. We are now ready to state the following theorem. 
\begin{thm}
   \item \textbf{{Including the point at infinity:} }there are precisely three curves for which $N(E) = |k|$. These correspond to the cases $k =3, 8, 17$.
    \item \textbf{{Excluding the point at infinity:}} there are precisely four curves for which $N(E) = |k|$, corresponding to the cases  $k = -1, -2, -4, 2$.
\end{thm}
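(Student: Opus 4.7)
The plan is to leverage the explicit upper bound $|k| \le 119$ already established in the discussion preceding the theorem, which reduces the problem from an a priori infinite search to a finite computational verification. Since the combination of Bennett's estimate $N(E) \le 10\, h_3(-108k)$, Scholz's reflection inequality $h_3(-3k) \le h_3(k)+1$, and Louboutin's explicit bound on $L(1,\chi)$ forces $N(E) < |k|$ whenever $|k| \ge 120$, every candidate curve satisfying $N(E) = |k|$ must lie in the range $1 \le |k| \le 119$. My plan is to enumerate this finite set and compute $N(E)$ for each associated Mordell curve, then compare with $|k|$ under both conventions.

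For each integer $k$ with $1 \le |k| \le 119$, I would determine the complete set of integer points $(x,y)$ on $y^2 = x^3 + k$. This can be done by consulting the classical tables of integral points on Mordell curves compiled by Gebel--Peth\H{o}--Zimmer, or by invoking the algorithm of Stroeker--Tzanakis based on linear forms in elliptic logarithms (as implemented in \texttt{Sage}, \texttt{PARI/GP}, or \texttt{Magma}). Since LMFDB supplies Mordell--Weil bases for all these curves, the required height reduction is entirely routine. Once the solution counts are tabulated, I would verify that:
\begin{itemize}
\item for $k \in \{3, 8, 17\}$ the number of affine integer solutions is exactly $|k|-1$, so that including the point at infinity yields $N(E) = |k|$;
\item for $k \in \{-1, -2, -4, 2\}$ the number of affine integer solutions is exactly $|k|$, so that excluding the point at infinity yields $N(E) = |k|$;
\item for all remaining $k$ with $1 \le |k| \le 119$, neither equality holds.
\end{itemize}
As a spot check, for $k = 17$ one recalls the classical list of sixteen affine points $(-2,\pm 3), (-1,\pm 4), (2,\pm 5), (4,\pm 9), (8,\pm 23), (43,\pm 282), (52,\pm 375), (5234,\pm 378661)$, which together with the point at infinity gives $N(E) = 17 = |k|$.

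The main obstacle is not conceptual but a matter of computational rigor: one must guarantee completeness of the integer-point list for every single $k$ in the range, since missing even one large solution (as famously occurs for $k = 17$, where the point $(5234, 378661)$ is easily overlooked by naive search) would invalidate the conclusion. I would address this by relying on the fact that integral points on $y^2 = x^3 + k$ have been rigorously determined for all $|k| \le 10^4$ in the literature, so no fresh computation is required beyond tabulation and case-by-case comparison against $|k|$. This completes the finite check and establishes the theorem.
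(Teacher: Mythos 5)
Your proposal follows essentially the same route as the paper: both rely on the bound $|k| \le 119$ derived just before the theorem from Bennett's estimate, Scholz's reflection principle, and the Louboutin/Le class number bounds, and then reduce the statement to a finite case check over that range. The paper's own proof is simply the sentence ``manually checking the cases for $|k| \le 119$,'' so your added care about guaranteeing completeness of the integral point lists (via rigorously verified tables or the elliptic logarithm method) only makes the same argument more explicit.
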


\begin{proof}
Manually checking the cases for $|k| \le 119$ we realize that there are only four cases where $N(E) = |k|$ (including the point at infinity), corresponding to $k = 3, 8, 17$ and only two cases where $N(E) = |k|$ (excluding the point at infinity), corresponding to $k = -1, -2, -4, 2$
\end{proof}

\begin{cor}
   \item \textbf{{Excluding the point at infinity:}} there is  no curve for which $N(E) = |2k|$. 
    \item \textbf{{Including the point at infinity:}} there is only one curve for which $N(E) = |2k|$, corresponding to the case $k = -1$
\end{cor}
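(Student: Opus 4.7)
The plan is to mirror the proof of the main theorem with $|k|$ replaced by $2|k|$, since the entire chain of upper bounds used there depends only on $k$, not on the target value of $N(E)$. First I would invoke Bennett's lemma to obtain $N(E)\le 10\,h_3(-108k)$ and then apply Scholz's reflection principle together with the trivial inequality $h_3(k)\le h(k)$ to reduce the problem to an explicit upper bound on $h(k)$.

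For $k>0$, I would plug in Le's bound $h(k)\le\sqrt{k}$ to get $N(E)\le 10(\sqrt{|k|}+1)$, so the hypothesis $N(E)=2|k|$ forces $2|k|\le 10(\sqrt{|k|}+1)$. Writing $u=\sqrt{|k|}$ this reduces to $u^2-5u-5\le 0$, hence $|k|\le 34$. For $k<-4$ I would use Dirichlet's class number formula together with Louboutin's $L(1,\chi)$-bound to obtain
\[
2|k| \le 10\left(\frac{|k|^{1/2}}{\pi}\Bigl(\tfrac{1}{2}\log|k|+0.716\Bigr)+1\right),
\]
which is easily checked to fail once $|k|$ exceeds a modest constant, considerably smaller than the cutoff $|k|\le 116$ obtained for the main theorem. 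The exceptional cases $k=-3$ and $k=-4$, where $w=6$ and $w=4$ in Dirichlet's formula, should be handled separately by direct inspection.

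After taking the union, we are left with a small range $|k|\le K_0$ and it remains to check, for each such $k$, whether the number of integral points on $y^2=x^3+k$ agrees with $2|k|$. This is a short finite computation and should produce exactly the claimed answer: counting the point at infinity only $k=-1$ works, giving the two points $\infty$ and $(1,0)$, while no curve satisfies $N(E)=2|k|$ once the point at infinity is removed.

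I expect no serious analytical obstacle, since the heavy lifting has already been done in the proof of the main theorem; the factor of $2$ only tightens the cutoff and in fact makes the computational verification shorter than before. The only real subtlety is bookkeeping: one must ensure that the $+1$ slack introduced by Scholz's principle is not eroding the cutoff near the boundary, and that the small negative values of $|k|$ where Louboutin's bound does not apply verbatim are checked by hand rather than swept into the generic inequality.
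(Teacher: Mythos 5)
Your proposal is correct and follows essentially the same route as the paper, which derives the corollary from the identical chain of bounds (Bennett's lemma, Scholz reflection, Le's bound for $k>0$, Dirichlet--Louboutin for $k<0$) with the target $|k|$ replaced by $2|k|$, yielding an even smaller cutoff, followed by the same finite computational check. The only refinement you add is making the tightened cutoff ($|k|\le 34$ for $k>0$) explicit, which the paper leaves implicit since its check up to $|k|\le 119$ already covers it.
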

We note that the above-mentioned cases are the only cases where $N(E)$ is an integral multiple of $k$.
\subsection{Explicit Bounds for $N(E)$}
Now, we utilize the above-mentioned result and give an example as to how one can compute explicit bounds for $N(E)$ utilizing the result outlined by {Bhargava et al.} Recall that $N(E) = O \left(|\mathrm{Disc}(E)|^{0.1117 + \varepsilon} \right).$ Now, let $0.1117 \ldots + \varepsilon = 0.26$, this implies that $N(E) \le C|\mathrm{Disc}(E)|^{0.26}$. Since we're dealing with Mordell equations of the form $y^2 = x^3 + k$, $\mathrm{Disc}(E) = -432k^2$. Now, we simply need to find  an absolute constant $C$ such that $C(|-432k^2|^{0.26}) > 10 \left( \frac{|k|^{1/2}}{\pi}(0.5 \log |k| +0.716) + 1\right) \forall k \in \mathbb N.$
\section{Explicit Bounds on Twists of Elliptic Curves}
In this section, we find an almost sharp explicit bound on the number of integral solutions on twists of elliptic curves. We utilize the ideas in {Duke} \cite{10.1093/imrn/rnab249} to find explicit bounds, indicating the possibility of finding interesting Diophantine properties on well defined twists of elliptic curves. 
\subsection{Defining Our Twists}
We begin by defining an elliptic curve $E: y^2 = x^3 + Ax + B$, with discriminant $\Delta = -16(4A^3 + 27B^2) > 0$ and roots $e_1 < e_2 < e_3.$ Now, let $\Omega_E$ denote the real period of $E$ such that $$ \Omega_E = \int \frac{\mathrm{d}x}{y} = \int \frac{\mathrm{d}x}{\sqrt{x^3 + Ax + B}} \quad \text{ where }y > 0.$$ For $n \in \mathbb{Z}^{+}$, let $E_n: y^2 = x^3 + n^2Ax + n^3B$ be the quadratic twist on $E.$ Finally, let $\nu_{E}(n)$ denote the number of integral points on $E_{n}^{*} (\mathbb{Z})$, a subset of $E_{N}( \mathbb{Z})$ with $\gcd(x, n) = 1$, such that  $$ 
\nu_E(n) = \# \left\{ (x, y) \in \mathbb{Z}^2 ; \, y^2 = x^3 + An^2 x + Bn^3 \, \text{where} \, \gcd(n, x) = 1 \, \text{and} \, e_1 \leq \frac{x}{n} \leq e_2 \right\}.
$$

\noindent Now, for integers $(a,b,c,d,e)$ we have $$F(x,y) = (a,b,c,d,e) = ax^4 + 4bx^3y + 6cx^2y^2 + 4dxy^3 + ey^4,$$ which represents a binary quartic form with binomial coefficients. Now let $F'$ denote the content of $F$, i.e. the $\gcd$ of the coefficients of $F$. We realize that $F' = \gcd(a, 4b, 6c, 4d, e)$ is an invariant under $\Gamma \in SL(2,\mathbb{Z})$.
 Now, according to invariant theory, the other invariants of $F$ are defined as $I = I_F := ae - 4bd + c^3$ and $J = J_F := ace + 2bd -b^2e  - d^2a  - c^3.$ We note that they are related to each other by the well known syzygy $\Delta_{F} = I_{F}^3 - 27 J_{F}^2.$ Now, for $\Delta_{F} \neq 0 $, define $\operatorname{Aut} F$ to be the group of $\Gamma $ automorphs for $\Gamma \in SL(2, \mathbb{Z}),$ by counting the trivial and non trivial automorphs, we realize that the order of $\operatorname{Aut} F $ must be either $2$ or $4$ $\implies $$\# \operatorname{Aut} F \in \left\{2,4 \right\}$. Now, for integers $I_0$ and $J_0$, define $\Delta_{F} = I_{0}^3 - 27 J_{0}^2$ such that $\mathcal{F} = \mathcal{F}(I_0, J_0) $ which may contain both primitive and non primitive forms $F$. For a collection of classes,  $F_0 \in F$ the weighted (Hurwitz) class number for binary quartic forms is defined as $$h( \mathcal F_0) = \sum_{F_{0} \in F}  \dfrac{2}{\# \operatorname{Aut} F}.$$ Finally, defining $h_{E} = h(\mathcal F_{E}^{+})$ for positive definite forms, i.e., forms such that $\Delta_{F} > 0 $ and $F(x,y) > 0 $ when $x \neq 0 $ and $y \neq 0$, we get a clever analogue for the class number formula using Ikehara's version of the Wiener–Ikehara theorem
 \begin{thm}[{Duke's Theorem}]\label{Wiener-Ikehara}
 $$
\lim_{N \to \infty} \frac{1}{\sqrt{N}} \sum_{n \le N} \nu_{E}(N)  = \frac{3 \Delta \Omega_E}{2 \pi^2 \psi(\Delta)} h_E  \quad \text{ with }\psi(n) = n \prod_{p |n} \left(1 + \frac{1}{p}\right) \text{ for } p \in \mathbb{P}. 
$$
\end{thm}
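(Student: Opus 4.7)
The plan is to interpret the partial sum $\sum_{n \le N} \nu_E(n)$ as the summatory function of a Dirichlet series whose rightmost pole sits at $s = 1/2$, and to extract the asymptotic by a Wiener--Ikehara Tauberian theorem. This follows the philosophy of Section~\ref{BinaryCubic}: rather than working with integral points on the twists $E_n$ directly, I would translate them into equivalence classes of binary quartic forms and appeal to the analytic theory of the zeta function attached to such forms.

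First, I would establish a bijective correspondence analogous to the cubic forms / Mordell curves bijection proved earlier. For each $n$, the set of points $(x,y) \in E_n^*(\ZZ)$ counted by $\nu_E(n)$ should be in natural correspondence with $SL_2(\ZZ)$-equivalence classes of integral binary quartic forms $F = (a,b,c,d,e)$ whose invariants $(I_F, J_F)$ are determined by $(A,B,n)$, each class weighted by $2/\#\Aut F$. The positive-definiteness condition $F \in \mathcal{F}_E^+$ will correspond exactly to $x/n$ lying in the bounded real oval $[e_1,e_2]$ of $E$, so that the weighted Hurwitz class number $h_E$ counts precisely these points up to automorphism.

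Next, I would package these counts into the Dirichlet series
\[
L_E(s) = \sum_{n \ge 1} \frac{\nu_E(n)}{n^s}
\]
and, using the bijection, identify $L_E(s)$ with a suitable twist of the Shintani zeta function for the prehomogeneous vector space of binary quartic forms. The analytic theory of such zeta functions yields a meromorphic continuation with a simple pole at $s = 1/2$ whose residue factors as an archimedean density times a non-archimedean Euler product times $h_E$. The archimedean density arises from integrating $dx/y$ over $[e_1, e_2]$, producing precisely the real period $\Omega_E$. The non-archimedean factor comes from imposing the coprimality condition $\gcd(x,n)=1$; a short local calculation collapses this to $\prod_{p \mid \Delta}(1 + 1/p)^{-1} = \Delta/\psi(\Delta)$. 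Combined with the normalizing constant $3/(2\pi^2)$ from the Shintani measure on quartic forms, and divided by the Wiener--Ikehara factor $\sigma = 1/2$, this reproduces the claimed constant $\frac{3 \Delta \Omega_E}{2\pi^2 \psi(\Delta)} h_E$.

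Finally, invoking Wiener--Ikehara on the non-negative sequence $\nu_E(n)$, contingent on ruling out further poles of $L_E(s)$ on the line $\Re(s) = 1/2$, delivers the asserted asymptotic after dividing by $\sqrt{N}$. The chief obstacle will be the middle step: carefully tracking the coprimality constraint through the form-point bijection so that the local factors at primes dividing $\Delta$ come out to exactly $(1+1/p)^{-1}$, and showing that no spurious polar contributions appear on the critical line. This is the technical heart of Duke's argument~\cite{10.1093/imrn/rnab249}, and it requires a precise sieve-theoretic analysis of the quartic form count; by contrast, once the residue is correctly identified, the Tauberian input is essentially off-the-shelf.
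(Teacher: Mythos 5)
The first thing to say is that the paper itself contains no proof of this statement: it is imported wholesale as ``Duke's Theorem'' from \cite{10.1093/imrn/rnab249}, so there is no internal argument to measure your attempt against. Judged on its own terms, your outline does track the strategy of Duke's actual proof --- parametrize the points counted by $\nu_E(n)$ by weighted $SL_2(\ZZ)$-classes of integral binary quartic forms with prescribed invariants, assemble the counts into a Dirichlet series with rightmost pole at $s=1/2$, and extract the asymptotic by a Tauberian theorem --- so the philosophy is sound and consistent with the source.

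But what you have written is a plan, not a proof, and every load-bearing step is asserted rather than established. The bijection between $E_n^{*}(\ZZ)$ and quartic form classes is the quartic analogue of the cubic correspondence in Section~\ref{BinaryCubic}, and it needs the covariant/syzygy machinery redone for quartics, including the verification that positive-definiteness of $F$ matches the condition $e_1 \le x/n \le e_2$; none of that is carried out. The meromorphic continuation of $L_E(s)$ with a simple pole at $s=1/2$ and no other singularities on the line $\Re(s)=1/2$ is not ``off-the-shelf'': Wiener--Ikehara requires the non-polar part to extend continuously to the closed half-plane, and establishing this for the relevant Shintani-type zeta function is itself a substantial theorem. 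The residue computation that is supposed to produce each of $\Omega_E$, $\Delta/\psi(\Delta)$, and $3/(2\pi^2)$ is described only in outline, and the local computation at primes dividing $\Delta$ --- where the coprimality condition $\gcd(x,n)=1$ enters --- is precisely the step you yourself flag as ``the technical heart of Duke's argument'' and defer to the reference. At that point the proposal reduces to a citation of \cite{10.1093/imrn/rnab249}, which is exactly what the paper already does; as an independent proof it has genuine gaps at every stage.
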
 
We are now ready to state the main result of this section.
\begin{thm}\label{explicitBound}
    Let there be an elliptic curve $E$ over $\mathbb{R}$ with discriminant $\Delta$, which is isomorphic to the Legendre normal form $$E(\lambda) = x(x-1)(x - \lambda)$$ for some $\lambda$ such that $0 < \lambda < 1,$ then $$ \lim_{N \to \infty} \frac{1}{\sqrt{N}} \sum_{n \le N} \nu_E(N) \le \frac{(|\Delta|^{1/2} - 1) \cdot (0.5 \log |\Delta| +0.716)}{4 \operatorname{L} \left(1, \sqrt{1 - \lambda}\right)}$$
    where $\operatorname{L} \left(a,b \right)$ is the logarithmic mean of $\left (a,b\right) \implies \operatorname{L} \left(a,b \right) = \dfrac{b - a}{\ln b - \ln a}.$
\end{thm}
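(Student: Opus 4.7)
The overall plan is to apply Duke's theorem (Theorem~\ref{Wiener-Ikehara}) to replace the limit with its closed form $\frac{3\Delta\,\Omega_E\,h_E}{2\pi^2\,\psi(\Delta)}$, and then upper-bound each of the three factors $\Omega_E$, $h_E$, and $\Delta/\psi(\Delta)$ separately. The logarithmic mean in the denominator of the target bound will come from $\Omega_E$, the Louboutin term $0.5\log|\Delta|+0.716$ will come from $h_E$, and the $\sqrt{|\Delta|}-1$ from an integer-part refinement of a class-number bound. This three-factor decomposition mirrors the structure of the main theorem's proof, only with $h_3$ replaced by the weighted Hurwitz class number of binary quartic forms.

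For $h_E$, I would follow essentially the route already used in Section~\ref{BinaryCubic}. The invariant theory of binary quartic forms attaches to each class of discriminant $\Delta$ a quadratic field whose class number controls $h_E$, so via a reflection-type reduction analogous to the one from $h_3$ to $h$ used earlier one can pass from $h_E$ to the ordinary class number $h(\Delta)$ of $\mathbb{Q}(\sqrt{\Delta})$. Applying Dirichlet's class number formula and then Lemma~\ref{Louboutin} in the odd-character case produces the factor $0.5\log|\Delta|+0.716$, and invoking Maohua Le's bound $h(k) \le \lfloor\sqrt{\Delta}/2\rfloor$ in tandem (rather than alone, as was done for the main theorem) yields the companion factor $\sqrt{|\Delta|}-1$. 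Combining these two bounds into a single product estimate is what produces the shape of the numerator.

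For the period $\Omega_E$, the Legendre normal form $y^2 = x(x-1)(x-\lambda)$ is chosen precisely so that $\Omega_E$ is a complete elliptic integral of the first kind, and Gauss's classical identity expresses it as $\Omega_E = \pi/\mathrm{AGM}(1,\sqrt{1-\lambda})$. The analytic crux is the classical inequality $\mathrm{AGM}(a,b) \ge L(a,b)$ for positive reals $a,b$, which inverts to give $\Omega_E \le \pi/L(1,\sqrt{1-\lambda})$ and thereby produces the $L(1,\sqrt{1-\lambda})$ denominator of the target bound. The remaining factor $\Delta/\psi(\Delta) = \prod_{p\mid\Delta}(1+p^{-1})^{-1}$ is at most $1$ and costs nothing.

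Assembling the three estimates and tracking the numerical constants $3/(2\pi^2)$ from Duke's theorem, the $1/(2\pi)$ from Dirichlet's formula with $w=2$, and the $\pi$ from Gauss's AGM identity should collapse to exactly the $1/4$ in the denominator of the stated bound, which will be a routine but unglamorous computation. The main obstacle I expect is the passage from the weighted Hurwitz class number $h_E$ of binary quartic forms to a bound involving a quadratic class number of discriminant $\Delta$: the correspondence is noticeably more delicate than the binary cubic case invoked in Section~\ref{BinaryCubic}, and one has to verify that the implied constants are uniform in $\lambda$. A secondary subtlety is confirming the direction of the $\mathrm{AGM}$--$L$ inequality and that it remains sharp enough on the interval $0<\lambda<1$; a reversal of that inequality would render the approach useless.
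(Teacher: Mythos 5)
Your skeleton matches the paper's: apply Duke's theorem (Theorem~\ref{Wiener-Ikehara}), bound $\Omega_E$ via the Gauss identity $\Omega_E=\pi/\operatorname{AGM}(1,\sqrt{1-\lambda})$ together with $\operatorname{AGM}(a,b)\ge \operatorname{L}(a,b)$, and bound $h_E$ by $h_4(\mathbb{Q}(\sqrt{\Delta}))\le h(\mathbb{Q}(\sqrt{\Delta}))$ followed by Louboutin. The direction of the $\operatorname{AGM}$--$\operatorname{L}$ inequality that you worry about is indeed the right one, and your concern about the $h_E\to h_4\to h$ passage is overcautious: the paper disposes of it in one line (the Hurwitz weights $2/\#\Aut F$ are at most $1$).

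However, your accounting of the numerator contains a genuine error. You propose to obtain the factor $\sqrt{|\Delta|}-1$ by invoking Maohua Le's bound $h\le\lfloor\sqrt{\Delta}/2\rfloor$ ``in tandem'' with the Dirichlet--Louboutin bound and ``combining these two bounds into a single product estimate.'' Two upper bounds on the same quantity $h$ yield their minimum, not their product, so this step is not a valid operation; Le's bound plays no role here. In the paper the entire factor $\sqrt{|\Delta|}\cdot(0.5\log|\Delta|+0.716)$ comes from the single chain $h\le \frac{|\Delta|^{1/2}}{\pi}L(1,\chi)$ plus Louboutin, while the $-1$ (i.e.\ the factor $1-\Delta^{-1/2}$) and the constant $1/4$ both come from the term you explicitly throw away: the paper bounds $\Delta/\psi(\Delta)<\zeta(2)(1-\Delta^{-1/2})$ via $\psi(n)\varphi(n)>n^2/\zeta(2)$ and $\varphi(n)<n-\sqrt{n}$, and the $\zeta(2)=\pi^2/6$ is exactly what converts Duke's $3/(2\pi^2)$ into $1/4$. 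Your alternative $\Delta/\psi(\Delta)\le 1$ is legitimate and leads to the constant $3/(2\pi^2)\approx 0.152$ in place of $\frac{1}{4}(1-\Delta^{-1/2})$; this actually implies the stated bound once $|\Delta|\gtrsim 7$, so your route could be repaired, but the deferred ``routine computation'' would not collapse to the stated $1/4$ and $\sqrt{|\Delta|}-1$ as you claim, and as written the proposal does not produce the theorem's bound.
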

\begin{proof}
\noindent We begin by stating an important lemma, 
\begin{lemma}
    $$\frac{n^2}{\zeta(2)} < \psi(n) \cdot \varphi(n) \implies \frac{n^2}{\zeta(2) \varphi(n)} < \psi(n) \text{ where $\varphi(n) $ is the Euler totient function}.$$
\end{lemma}
\begin{proof}
    $$\frac{\psi(n) \varphi(n)}{n^2} = \prod_{p |n} \left( 1 - \frac{1}{p^2} \right) > \prod_{p \in \mathbb{P}} \left(1 - \frac{1}{p^2} \right) = \zeta(2)^{-1} \implies \frac{n^2}{\zeta(2)} < \varphi(n) \cdot \psi(n)$$
\end{proof}
\noindent Now, since we have $\varphi(n) < n - \sqrt{n}$ for composite $n$, this implies that $$\frac{n^2}{\zeta(2) \cdot \left( n - \sqrt{n} \right)} < \frac{n^2}{\zeta(2) \varphi(n)} < \psi(n),$$ which in turn implies the following result: 
$$ \lim_{N \to \infty} \frac{1}{\sqrt{N}} \sum_{n \le N} \nu_E(n) \le \frac{ h_E\left( 1 - {\Delta}^{-1/2} \right)\Omega_E}{4}.$$ We now try to calculate explicit bounds for $\Omega_E$.
We now prove the following lemma
 \begin{lemma}\label{hypergeometric}
    If $0<\lambda<1$, then the real period $\Omega\left(E_\lambda\right)$ is given by
$$
\Omega\left(E_\lambda\right)=\pi \cdot { }_2 F_1\left(\begin{array}{cc}
1 / 2, & 1 / 2 \\
& 1
\end{array} ; \lambda\right) = \frac{\pi}{\operatorname{AGM}(1, \sqrt{1 - \lambda})}.
$$
\end{lemma}
\begin{proof}
We utilize the arguments presented by  {Rouse} \cite{wfu}, and begin by noting that the map $(x,y) \mapsto \left(x + \frac{\lambda + 1}{2}, \frac{y}{2} \right)$ transforms the elliptic curve $$E(\lambda) = x(x-1)(x-\lambda) \mapsto 4(x-a)(x-b)(x-c) $$
where $$ a = -\frac{\lambda + 1}{2}, b = \frac{2\lambda - 1}{3}, c = \frac{2 - \lambda}{3}.$$ Now, {Knapp} \cite{knapp1992elliptic} shows that $$ \omega_1 = \Omega_E =\int_{a}^{b} \frac{\mathrm{d}x}{\sqrt{(x-a)(x-b)(x-c)}}.$$ We utilize this result to present the following construction which allows us to compute $\Omega_E$ in terms of the arithmetic-geometric mean $(\operatorname{AGM})$. 
\begin{equation}
   \omega_1 =  \Omega_E = \int_{a}^{b} \frac{\mathrm{d}x}{\sqrt{(x-a)(x-b)(x-c)}} = \frac{\pi}{\operatorname{AGM}(\sqrt{c-a}, \sqrt{c-b})}.
\end{equation} Now, for $\omega_1$, we make the change of variables $\sqrt{x-a} = \sqrt{b-a} \sin \theta $ to obtain $$ \omega_1 = 2 \int_{0}^{\pi/2} \frac{\mathrm{d}\theta}{\sqrt{(c-b) \sin^2 \theta + (c-a) \cos^2 \theta}}.$$ Now, define $I (r,s)$ for $0 < r <s$ by $$ I(r,s) = \int_{0}^{\pi/2}  \frac{\mathrm{d}\theta}{\sqrt{r^2 \sin^2\theta + s^2\cos^2\theta}}.$$ Our ultimate goal is to prove 
    \begin{equation}\label{3.1}
        I(r,s) = \frac{\pi}{2 \operatorname{AGM}(s,r)},   
    \end{equation}
which will immediately imply \ref{hypergeometric}. We claim that \begin{equation}\label{final} 
I(r,s) = I\left(\sqrt{rs}, \frac{r+s}2 \right)
\end{equation} is sufficient in order to prove \ref{3.1} by noting that $$I(r,s) = I(\operatorname{AGM}(s,r), \operatorname{AGM}(s,r)) \quad \text{ and } \quad I(M,M) = \frac{\pi}{2M}.$$

\noindent To prove \ref{final}, regard \(0 < r < s\) as fixed. For \(0 \leq t \leq 1\), the function
\[
\frac{s + r}{2st} + \frac{(s - r)t^2}{2st}
\]
is monotically increasing in the interval $\left[0,1\right]$. Therefore
\[
\sin \theta = \frac{2 \sin \phi}{(s + r) + (s - r) \sin^2 \phi}, \quad 0 \leq \phi \leq \frac{\pi}{2},
\]
is a legitimate change of variables, and \(\theta\) extends from 0 to \(\frac{\pi}{2}\). Now, we have
\[
I(r,s) = \int_0^{\frac{\pi}{2}} \frac{\cos \theta \,  \mathrm{d} \theta}{\cos^2 \theta \sqrt{r^2 \tan^2 \theta + s^2}}.
\]
We readily compute
\[
\cos \theta \, \mathrm{d}\theta = 2 \cos \phi \left[(s + r) - (s - r) \sin^2 \phi\right] \mathrm{d} \phi
\]
with
\[
\cos^2 \theta = \frac{\left[(s + r)^2 - (s - r)^2 \sin^2 \phi\right]}{\left[(s + r) + (s - r) \sin^2 \phi\right]^2}
\]
and
\[
\tan^2 \theta = \frac{4s^2 \sin^2 \phi}{\cos^2 \phi \left[(s + r)^2 - (s - r)^2 \sin^2 \phi\right]}
.\]
Finally, we obtain
\[
I(r,s) = \int_0^{\frac{\pi}{2}} \frac{2 \mathrm{d} \phi}{\sqrt{4r^2 \sin^2 \phi + (s + r)^2 \cos^2 \phi}},
\]which completes the proof of \ref{final}, and implies that $$\Omega_E = \frac{\pi}{\operatorname{AGM} \left(1,  \sqrt{1- \lambda} \right)}.$$ We now focus our attention towards ${ }_2F_1$ and prove the fact that \begin{equation}\label{3.4}
{{ }_2 F_1\left(\begin{array}{cc}
1 / 2, & 1 / 2 \\
& 1
\end{array} ; \lambda \right)} = \dfrac{1}{\operatorname{AGM}\left(1, \sqrt{1-\lambda} \right)}.
\end{equation}
Define $$\operatorname{K}(k) = \int_{0}^{\pi/2} \frac{\mathrm{d}\theta}{\sqrt{1 - k^2\sin^2\theta}} = \int_{0}^{1} \frac{\mathrm{d}t}{(1-t^2)(1 - k^2t^2)},$$ which is the complete elliptic integral of the first kind. We have the well known identities $$\frac{2\operatorname{K}(k)}{\pi}= {}_2F_1 \left(\frac{1}{2}, \frac{1}{2}; 1; k^2\right) \quad \text{ and } \quad \operatorname{K}(k) = \frac{\pi}{2 \operatorname{AGM}(1, \sqrt{1-k^2})},
$$  which imply \ref{3.4}, completing the proof.\footnote{We note that {Rouse} \cite{wfu} gives a more beautiful proof of the same statement, using Wallis' integration formulaes, but for our purposes such a proof is extraneous.}
\end{proof}

\noindent Now, define $\operatorname{L} \left(a,b \right)$ to be the logarithmic mean of $\left \{a,b\right\} $ such that  $\operatorname{L} \left(a,b \right) = \frac{b - a}{\ln b - \ln a},$ then $$\operatorname{AGM}\left(a,b \right) \geq \operatorname{L} \left(a, b \right)\implies \dfrac{1}{\operatorname{AGM}(1, \sqrt{1- \lambda})} \leq  \dfrac{1}{\operatorname{L} (1, \sqrt{1-\lambda})},$$ which gives us the inequality \vspace{0.1in} $$\frac{\Omega_E}{\pi} = {{ }_2 F_1\left(\begin{array}{cc}
1 / 2, & 1 / 2 \\
& 1
\end{array} ; \lambda \right)} \leq \frac{\pi}{\operatorname{L} \left(1, \sqrt{1 - \lambda}\right)},$$ which in turn implies that $$ \lim_{N \to \infty} \frac{1}{\sqrt{N}} \sum_{n \le N} \nu_E(n) \le \frac{\pi}4 \cdot \frac{ h_E\left( 1 - {\Delta}^{-1/2} \right)}{\operatorname{L} \left(1, \sqrt{1 - \lambda}\right)}.$$ Now we note that $h_E$ is counting the number of equivalence classes of a binary quartic form with weight ${2}/{g}$ where $g$ is the order of the automorphism group. Recall that $\operatorname{max} \left({2}/{g}\right) = 1,$ which implies that the weighted class number formula is bounded by the actual class number formula for binary quartic forms. Finally, we note that the class number of binary quartic forms with discriminant $\Delta$ is also the $4$-part of the class number of quadratic field $\mathbb{Q}(\sqrt{\Delta}),$ which implies $$ h_E \le h_4(\mathbb{Q}(\sqrt{\Delta})) \le h(\mathbb{Q}(\sqrt{\Delta})).$$
    Now, from Louboutin's bounds \ref{Louboutin}, we know that $h(\mathbb{Q}(\sqrt{\Delta}))   \le \frac{|\Delta|^{1/2}}{\pi}(0.5 \log |\Delta| +0.716),$ which implies that $ h_E  \le \frac{|\Delta|^{1/2}}{\pi}(0.5 \log |\Delta| +0.716).$ This gives us our required explicit bound 

$$ \lim_{N \to \infty} \frac{1}{\sqrt{N}} \sum_{n \le N} \nu_E(N) \le \frac{(|\Delta|^{1/2} - 1) \cdot (0.5 \log |\Delta| +0.716)}{4 \operatorname{L} \left(1, \sqrt{1 - \lambda}\right)}.$$
\end{proof}
We also obtain the following corollaries, which may be viewed as stronger versions of \ref{explicitBound}.
\begin{cor} 
The inequality $$\lim_{N \to \infty} \frac{1}{\sqrt{N}} \sum_{n \le N} \nu_E(n) \le \frac{ (|\Delta|^{1/2} - 1) \cdot (0.5 \log(\Delta) + 0.716) \cdot  (5 - \lambda) }{8 \pi} \cdot \left[{\log \left(\frac{4}{\sqrt{1 - \lambda}}\right)}\right]$$ holds for all $\lambda \in (0,1)$.
\end{cor}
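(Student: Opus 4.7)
The corollary has the same prefactor $(|\Delta|^{1/2}-1)(0.5\log|\Delta|+0.716)$ as Theorem \ref{explicitBound}, so the plan is to leave every step of the proof of Theorem \ref{explicitBound} that bounds $h_E$ intact and replace only the final bound on the period $\Omega_E$. Concretely, I would rewind the proof to the intermediate estimate
\[
\lim_{N\to\infty}\frac{1}{\sqrt{N}}\sum_{n\le N}\nu_E(n)\;\le\;\frac{h_E\bigl(1-\Delta^{-1/2}\bigr)\,\Omega_E}{4},
\]
apply the Louboutin bound $h_E\le\frac{|\Delta|^{1/2}(0.5\log|\Delta|+0.716)}{\pi}$ exactly as before, and factor the quantity $(|\Delta|^{1/2}-1)/|\Delta|^{1/2}$ out of $1-\Delta^{-1/2}$. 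What remains is then
\[
\lim_{N\to\infty}\frac{1}{\sqrt{N}}\sum_{n\le N}\nu_E(n)\;\le\;\frac{(|\Delta|^{1/2}-1)\,(0.5\log|\Delta|+0.716)}{4\pi}\cdot\Omega_E,
\]
so the corollary reduces to establishing the single analytic inequality
\[
\Omega_E\;\le\;\frac{5-\lambda}{2}\,\log\!\left(\frac{4}{\sqrt{1-\lambda}}\right)\qquad(0<\lambda<1).
\]

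By Lemma \ref{hypergeometric} this inequality is equivalent to $2K(\sqrt{\lambda})\le\frac{5-\lambda}{2}\log\frac{4}{\sqrt{1-\lambda}}$ for the complete elliptic integral of the first kind, and this will be the main work of the proof. The plan is to verify it by a direct analytic argument rather than by appeal to the AGM-vs-$\mathrm{L}$ inequality used in Theorem \ref{explicitBound}, which is asymptotically too weak. The shape of the target bound is dictated by the classical Legendre asymptotics: as $\lambda\to 1$ one has $K(\sqrt{\lambda})=\log\frac{4}{\sqrt{1-\lambda}}+\tfrac{1}{4}(1-\lambda)\bigl(\log\frac{4}{\sqrt{1-\lambda}}-1\bigr)+O((1-\lambda)^2\log\frac{4}{\sqrt{1-\lambda}})$, so the factor $(5-\lambda)/4=1+(1-\lambda)/4$ is chosen precisely so that the proposed bound is tight at $\lambda\to 1^-$; at $\lambda=0$ the inequality is the numerical statement $\pi\le \tfrac52\log 4$, which holds comfortably. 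I expect to obtain the bound by defining $f(\lambda):=\frac{5-\lambda}{2}\log\frac{4}{\sqrt{1-\lambda}}-2K(\sqrt{\lambda})$, checking $f(0)>0$, verifying $\lim_{\lambda\to 1^-}f(\lambda)\ge 0$ from the Legendre expansion, and showing that any interior minimum is nonnegative either by a one-variable calculus argument using $\frac{d}{d\lambda}K(\sqrt{\lambda})=\frac{E(\sqrt{\lambda})-(1-\lambda)K(\sqrt{\lambda})}{2\lambda(1-\lambda)}$ or by a term-by-term comparison with the power series ${}_2F_1(1/2,1/2;1;\lambda)=\sum_{n\ge 0}\binom{2n}{n}^2\frac{\lambda^n}{16^n}$ against the expansion of $\frac{5-\lambda}{2\pi}\log\frac{4}{\sqrt{1-\lambda}}$.

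The main obstacle is precisely this elliptic-integral inequality: monotonicity of $f$ is not obvious, and the difference of the two sides is small (both sides blow up together as $\lambda\to 1^-$), so a naive interval estimate will not suffice. If the calculus route proves delicate, an alternative is to use one step of the Landen transformation $K(k)=\tfrac{2}{1+k'}K\!\bigl(\tfrac{1-k'}{1+k'}\bigr)$ to reduce the case $\lambda$ near $1$ to a case $\lambda$ bounded away from $1$, where the series for ${}_2F_1$ converges rapidly and truncation with explicit tail bounds gives the inequality numerically. Once the elliptic-integral bound is established, substituting it into the displayed intermediate estimate and combining with the Louboutin bound on $h_E$ yields the stated inequality with no further work.
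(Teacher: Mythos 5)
Your reduction is exactly the one the paper uses: rewind to the intermediate estimate $\lim_{N\to\infty}\frac{1}{\sqrt N}\sum_{n\le N}\nu_E(n)\le \frac{h_E\left(1-\Delta^{-1/2}\right)\Omega_E}{4}$, keep the Louboutin bound on $h_E$, and replace only the bound on the period, reducing the corollary to $\Omega_E=2K(\sqrt\lambda)\le\frac{5-\lambda}{2}\log\frac{4}{\sqrt{1-\lambda}}$. The inequality you isolate, $K(r)<\log(4/r')\left[1+\frac14(r')^2\right]$ with $r'=\sqrt{1-r^2}$, is precisely the published inequality of Alzer that the paper cites at this point; the paper's entire proof consists of quoting that inequality and substituting $r=\sqrt\lambda$, $(r')^2=1-\lambda$.

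Where your proposal falls short is that you do not actually prove this inequality: you check it is consistent with the Legendre asymptotics as $\lambda\to1^-$ and numerically true at $\lambda=0$, and you list three candidate strategies (a derivative analysis of $f(\lambda)$, a term-by-term series comparison, a Landen-transformation reduction), but none is carried out, and you yourself flag that the two sides blow up together as $\lambda\to1^-$, so the comparison is delicate. As written, this unproven elliptic-integral estimate is the entire mathematical content of the corollary. The gap is easy to close by citation --- this is Alzer's theorem, and that is all the paper does --- but a self-contained proof along the lines you sketch would require genuinely finishing one of your three routes with explicit error control near $\lambda=1$. One small correction to your heuristics: the bound is not tight as $\lambda\to1^-$; the Legendre expansion shows the right-hand side exceeds $K(\sqrt\lambda)$ by $\frac{1-\lambda}{4}+o(1-\lambda)$, so there is genuine slack at both endpoints, which should make whichever analytic route you choose more forgiving than you suggest.
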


\begin{proof}
  Set $r' = \sqrt{1 - r^2}.$ Now, {Alzer} \cite{1998MPCPS.124..309A} proved that $$\operatorname{K}(r) < {\log(4/r')} \left[1 + \frac{1}{4}(r')^2 \right].$$Now, since we know that $$\Omega_E = 2\operatorname{K}(\sqrt \lambda),$$ therefore $$2 \operatorname{K}(\sqrt{\lambda}) < 2 \cdot {\log(4/\sqrt{\lambda}^{'})} \left[1 + \frac{1}{4}(\sqrt{\lambda}^{'})^2 \right]$$ implies the desired result.
\end{proof}

\begin{cor}
    $$ \lim_{N \to \infty} \frac{1}{\sqrt{N}} \sum_{n \le N} \nu_E(N) < \frac{3(\Delta^{1/2} - 1)(0.5 \log \Delta + 0.716)}{ 2\pi (\lambda + 3)} \cdot \left[\log \frac{4}{(\sqrt{1- \lambda)}}\right]$$
\end{cor}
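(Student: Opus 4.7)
The plan is to mirror the argument used for the previous corollary, replacing Alzer's bound $\operatorname{K}(r) < \log(4/r')\bigl[1 + (r')^2/4\bigr]$ by a sharper estimate that produces the factor $3/(\lambda+3)$ in place of $(5-\lambda)/4$. I would begin from the intermediate inequality established inside the proof of Theorem~\ref{explicitBound}, namely
$$\lim_{N \to \infty} \frac{1}{\sqrt{N}} \sum_{n \le N} \nu_E(n) \;\le\; \frac{h_E\bigl(1 - \Delta^{-1/2}\bigr)\Omega_E}{4},$$
and then substitute the Louboutin-type class-number bound used there (Lemma~\ref{Louboutin} applied via $h_E \le h_4(\mathbb{Q}(\sqrt\Delta)) \le h(\mathbb{Q}(\sqrt\Delta))$) to obtain
$$h_E\bigl(1 - \Delta^{-1/2}\bigr) \;\le\; \frac{(|\Delta|^{1/2} - 1)(0.5 \log |\Delta| + 0.716)}{\pi}.$$

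Next, I would invoke Lemma~\ref{hypergeometric} to write $\Omega_E = 2\operatorname{K}(\sqrt\lambda)$, and apply a refined Alzer-type upper bound of the form
$$\operatorname{K}(r) \;<\; \frac{3}{r^2 + 3}\,\log\!\frac{4}{\sqrt{1 - r^2}}, \qquad 0 < r < 1,$$
at $r = \sqrt\lambda$, giving $\Omega_E < \tfrac{6}{\lambda+3}\log\bigl(4/\sqrt{1-\lambda}\bigr)$. Multiplying this $\operatorname{K}$-bound against the displayed class-number bound, combining with the factor of $1/4$, and simplifying the constant $6/(4\pi) = 3/(2\pi)$ yields exactly the stated inequality.

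The main obstacle is justifying the refined inequality on $\operatorname{K}(r)$ with denominator $r^2 + 3$, since it does not appear verbatim among the bounds in \cite{1998MPCPS.124..309A}. I would attack this by studying the auxiliary function
$$f(r) \;=\; \frac{(r^2 + 3)\operatorname{K}(r)}{\log\bigl(4/\sqrt{1 - r^2}\bigr)}$$
on $(0,1)$, computing $f'(r)$ using the Legendre differential equation for $\operatorname{K}$, and deducing the required sign of $f(r) - 3$ from a combination of endpoint asymptotics as $r \to 0^+$ and $r \to 1^-$ together with a derivative-sign analysis on the interior—the same general monotonicity strategy that underlies the Alzer inequality used in the previous corollary. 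An alternative, more pedestrian route would be to compare term-by-term the power-series expansion of $\operatorname{K}(r)$ against $\tfrac{3}{r^2+3}\log(4/\sqrt{1-r^2})$ in the regime where $\lambda$ stays bounded away from $0$, which is the range where the new bound is strictly sharper than the previous corollary.
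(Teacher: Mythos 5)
Your overall route is exactly the paper's: start from the intermediate bound $\lim_{N\to\infty}\frac{1}{\sqrt N}\sum_{n\le N}\nu_E(n)\le \frac{h_E(1-\Delta^{-1/2})\Omega_E}{4}$, bound $h_E$ via $h_4(\mathbb{Q}(\sqrt\Delta))\le h(\mathbb{Q}(\sqrt\Delta))$ and Lemma \ref{Louboutin}, write $\Omega_E=2\operatorname{K}(\sqrt\lambda)$, and replace Alzer's estimate by $\operatorname{K}(r)<\frac{3}{3+r^2}\log\frac{4}{r'}$; the constants then combine as you say. The only difference is that the paper does not prove this last inequality --- it simply cites Zhang et al.\ \cite{1c830260f0b34c899998c8f62fd396e3} --- whereas you propose to establish it yourself.

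That is where the gap lies, and it is not merely a matter of filling in a monotonicity argument: the inequality $\operatorname{K}(r)<\frac{3}{3+r^2}\log\frac{4}{r'}$ is false as stated. Since $\frac{3}{3+r^2}\le 1$, it would force $\operatorname{K}(r)<\log(4/r')$, but it is classical that $r\mapsto \operatorname{K}(r)-\log(4/r')$ decreases from $\frac{\pi}{2}-\log 4>0$ to $0$ on $(0,1)$, so $\operatorname{K}(r)>\log(4/r')$ for every $r$ (numerically, $\operatorname{K}(1/2)\approx 1.686$ versus $\log(4/\sqrt{3/4})\approx 1.530$). Your own proposed endpoint check already detects this: the auxiliary function $f(r)=\frac{(r^2+3)\operatorname{K}(r)}{\log(4/r')}$ satisfies $f(0^+)=\frac{3\pi}{2\log 4}\approx 3.40>3$, so $f-3$ has the wrong sign near $0$, and in fact on all of $(0,1)$; there is no sub-interval of $\lambda$ on which the bound can be rescued. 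The inequality must be a mis-transcription of the Zhang et al.\ result (a usable upper bound needs a prefactor exceeding $1$, as in Alzer's $1+\tfrac{1}{4}r'^2$ from the previous corollary), and with any corrected bound the constant $\frac{3}{2\pi(\lambda+3)}$ in the statement would change. So your derivation of the stated constant is faithful to the paper's argument, but the key analytic input cannot be proved by your monotonicity strategy --- or by any other --- because it is not true.
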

\begin{proof}
   {Zhang et al.} \cite{1c830260f0b34c899998c8f62fd396e3} proved that $$ \operatorname{K}(r) < \frac{3}{3 + r^2} \log \frac{4}{r'}.$$ Therefore, we have$$ 2 \operatorname{K}(\sqrt{\lambda}) < \frac{6}{\lambda + 3} \log \frac{4}{\sqrt{1 - \lambda}},$$ which implies the desired result. 
\end{proof}
We note that since $\Delta > 0$, the elliptic curve $E$ has two connected components, and  is always isomorphic to an elliptic curve of the form $E(\lambda) = x(x-1)(x- \lambda)$ for some $\lambda \in \mathbb{R}$ with $0 < \lambda <1,$ proving the widespread applicability of the above-mentioned results. We also note that \ref{explicitBound} is not a tight bound under the assumption of the {GRH} because of the following result by {Shankar and Tsimerman }\cite{shankar2023nontrivial}.
\begin{thm}\label{thm: mainquadratic}
    Let $m=4$ or $5$.
Assume the refined Birch and Swinnerton-Dyer conjecture for elliptic curves over $\mathbb Q$. Then $h_m(\mathbb Q(\sqrt{D}))=O_{\varepsilon}(D^{\frac12-\frac{\delta}2+\varepsilon})$ where $\frac12-\delta$ is the best subconvex bound we have for $L$-functions of elliptic curves over $\mathbb Q$. Since $\delta $ is at least $ \frac{25}{256}$ this implies that $h_4(\mathbb{Q} ( \sqrt{D})) = O_{\varepsilon} (D^{0.451171875 + \varepsilon} ).$
\end{thm}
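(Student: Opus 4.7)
The plan is to follow the Shankar–Tsimerman strategy: parametrize $m$-torsion of the class group of $\mathbb{Q}(\sqrt D)$ in terms of orbits of integral binary or ternary forms via Bhargava-type parametrizations, identify these orbits with elements of a Selmer group of a family of elliptic curves whose discriminants are tied to $D$, and then use the refined BSD conjecture together with the subconvex bound for the corresponding $L$-functions to extract the exponent $\tfrac12-\tfrac{\delta}{2}+\varepsilon$. The heart of the argument is therefore a conversion of an algebraic counting problem into an analytic one about central $L$-values.

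For $m=4$, I would first realize $\Cl(\mathbb{Q}(\sqrt D))[4]$ inside the set of $\SL_2(\ZZ)$-orbits of integral binary quartic forms $F$ with invariants satisfying $I_F^3-27 J_F^2 = c\cdot D$ for a fixed constant $c$, exploiting the same binary-quartic framework that already appeared in the set-up preceding Theorem~\ref{explicitBound}. Each such orbit represents a class in the $2$-Selmer group of the elliptic curve $E_{I,J}:y^2=x^3-\tfrac{I}{3}x-\tfrac{J}{27}$, so that $h_4(\mathbb{Q}(\sqrt D))$ is controlled by $|\operatorname{Sel}_2(E_D)|$ for the appropriate twist $E_D$, whose discriminant satisfies $|\Delta_{E_D}|\asymp D^2$. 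Assuming refined BSD in rank zero, $|\operatorname{Sel}_2(E_D)|$ factors through $L(1,E_D)$ times Tamagawa contributions, the real period (bounded via Lemma~\ref{hypergeometric}), and a torsion factor, all of which are absorbable into $D^\varepsilon$. Applying the best subconvex bound $L(1,E_D)\ll |\Delta_{E_D}|^{1/4-\delta/2+\varepsilon}$ then yields the claimed $D^{1/2-\delta/2+\varepsilon}$. The case $m=5$ proceeds analogously using the Bhargava parametrization of quintic rings by quadruples of ternary quadratic forms, with a suitable family of genus-one curves (and their attached modular $L$-functions) replacing the elliptic-curve setup.

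The main obstacle is executing the refined BSD step uniformly across the family. One must handle curves whose $L$-function vanishes at $s=1$, where refined BSD outputs a higher derivative rather than $L(1,E_D)$ itself; these are controlled by the Bhargava–Shankar density results which show that $2$-Selmer groups have bounded average size, so high-rank exceptional curves contribute only a negligible fraction to the sum. A secondary difficulty is ensuring that the Tamagawa product and conductor contributions really are $D^\varepsilon$ on average, which requires care at primes of bad reduction and a quantitative uniform version of the conjectural BSD formula across the twisted family. Finally, one must verify that the Bhargava-type parametrization of $4$-torsion (respectively $5$-torsion) classes is both injective and surjective up to a bounded multiplicative factor, so that counting orbits really does count $h_m(\mathbb{Q}(\sqrt D))$ rather than some larger or smaller invariant.
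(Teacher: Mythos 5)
The first thing to note is that the paper contains no proof of this statement: it is quoted verbatim from Shankar and Tsimerman \cite{shankar2023nontrivial} and used only as a black box to argue that the bound of Theorem \ref{explicitBound} is not sharp. There is therefore no in-paper argument to compare against, and your sketch must be judged on its own as a reconstruction of the cited proof.

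Your overall strategy --- push class-group torsion into a Selmer or Tate--Shafarevich group attached to a quadratic-twist family, convert that to a central $L$-value via refined BSD, and finish with subconvexity --- has the right shape, but the steps you would actually have to execute are either wrong or missing. (i) The parametrizations you invoke are not the relevant ones: $\SL_2(\ZZ)$-orbits of integral binary quartics with $I^3-27J^2$ proportional to $D$ parametrize $2$-Selmer elements of the Jacobians $E_{I,J}$, not order-$4$ ideal classes of $\QQ(\sqrt D)$; and Bhargava's quintic parametrization (quadruples of $5\times 5$ alternating forms, not of ternary quadratic forms) produces quintic rings with sextic resolvents, which is not the object controlling $5$-torsion of quadratic class groups. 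What is needed instead is a map from unramified cyclic degree-$m$ extensions of $\QQ(\sqrt D)$ into $\mathrm{Sha}[m]$ of a suitable elliptic curve with a rational $m$-isogeny, twisted by $D$; you never construct such a map. (ii) Your exponent bookkeeping does not produce the stated bound: with $|\Delta_{E_D}|\asymp D^2$, the inequality $L(1,E_D)\ll |\Delta_{E_D}|^{1/4-\delta/2+\varepsilon}$ gives $D^{1/2-\delta}$, not $D^{1/2-\delta/2}$. In the normalization of the theorem the subconvex input is $L(1,E_D)\ll_{\varepsilon} D^{1/2-\delta+\varepsilon}$ (note $\delta=25/256$ matches a Burgess-type twist bound combined with the Kim--Sarnak exponent, since $3/8+7/256=1/2-25/256$), the reciprocal of the real period of the twist contributes $D^{1/2}$, so BSD gives $|\mathrm{Sha}(E_D)|\ll D^{1-\delta+\varepsilon}$; the saving of $\delta/2$ then appears only because the torsion classes land in $\mathrm{Sha}[m]$, whose order is a perfect square, so that one may take a square root. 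That square-root step is the entire point of the exponent in the theorem and is absent from your sketch. (iii) The statement is a pointwise bound for every $D$, so you cannot discard positive-rank twists as a ``negligible fraction'' using Bhargava--Shankar average results; for the particular $D$ at hand the rank must be controlled directly, e.g.\ under BSD by the analytic rank, which is $O(\log D/\log\log D)$ and hence contributes only $D^{o(1)}$.
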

Since $h_E \leq h_4(\mathbb{Q}(\sqrt{\Delta}) ),$ this implies that $ h_E = O_{\varepsilon} (\Delta^{0.451171875 + \varepsilon})$, which tells us that the bound is not tight under the assumption of {GRH}. However, making this bound explicit is a non-trivial task, which is beyond the scope of this paper. 

\section{Sharp Bounds for Another Family of Elliptic Curves}
In this section, we find the best possible bounds for the number of integral points of a family of elliptic curves. We utilize the ideas of {Pincus and Washington} \cite{pincus2024field} and define an explicit relationship between certain squares in Lucas sequences and a family of elliptic curves parameterized by the coefficients of the Lucas sequence. In particular, we prove the following Theorem. 
\begin{thm}\label{NewThm}
Let $t \neq 2$ be an integer such that the fundamental unit $\omega$ of the quadratic field $\mathbb{Q} \left( \sqrt{t^2 + 4} \right)$ is $ \left(t + \sqrt{t^2 + 4} \right)\slash 2$. Then, the elliptic curve $E :=  y^2 = (t^2 + 4)x^4 - 4$ has exactly one integral point. When $t = 2$, the elliptic curve has exactly two integral points. 
 \end{thm}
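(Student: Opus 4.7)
The strategy I propose is to transform the quartic equation into a question about perfect squares in a Lucas sequence and then invoke Pincus and Washington \cite{pincus2024field}. The plan is as follows.

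First, I would rewrite the equation as $(t^2+4)x^4 - y^2 = 4$ and work inside the real quadratic field $K = \mathbb{Q}(\sqrt{t^2+4})$. Setting $D = t^2+4$ and $\xi = (y + x^2\sqrt{D})/2$, a short parity argument (splitting into cases based on the parity of $t$, using $Dx^4 - y^2 = 4$ and the explicit description of $\mathcal{O}_K$) shows that $\xi \in \mathcal{O}_K$, and a direct computation yields $N(\xi) = (y^2 - Dx^4)/4 = -1$, so $\xi$ is a unit of norm $-1$.

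Next, I would exploit the hypothesis that $\omega = (t+\sqrt{D})/2$ is the fundamental unit of $\mathcal{O}_K$. Since $N(\omega) = -1$, the full unit group is $\{\pm 1\}\langle \omega\rangle$, and the units of norm $-1$ are precisely $\pm\omega^n$ with $n$ odd. Hence $\xi = \pm\omega^n$ for a unique odd $n$. Introducing the associated Lucas-type sequences defined by $U_0 = 0$, $U_1 = 1$, $V_0 = 2$, $V_1 = t$, and the common recurrence $W_{n+1} = tW_n + W_{n-1}$, one has $\omega^n = (V_n + U_n\sqrt{D})/2$. Comparing coefficients (and absorbing signs into $n$ and $y$) reduces the integral-points problem on $E$ to determining the odd $n \geq 1$ for which $U_n$ is a perfect square, with $x^2 = U_n$ and $y = \pm V_n$; the standard identity $V_n^2 - DU_n^2 = 4(-1)^n$ confirms that $DU_n^2 - V_n^2 = 4$ whenever $n$ is odd, as required.

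The final step is to apply Pincus and Washington's classification of squares in these Lucas sequences, which is tailored to precisely the fundamental unit hypothesis in play here. Their theorem implies that for $t \neq 2$ the only odd $n \geq 1$ with $U_n$ a square is $n = 1$, producing the unique orbit $(|x|,|y|) = (1, |t|)$; for $t = 2$, one picks up the additional square $U_7 = 169 = 13^2$, recovering Ljunggren's classical second solution $(|x|,|y|) = (13, 478)$ to $y^2 = 2x^4 - 1$.

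The hardest step I anticipate is verifying the membership $\xi \in \mathcal{O}_K$: this requires separate parity analyses when $t$ is odd (so $D \equiv 1 \pmod 4$ and $\mathcal{O}_K = \mathbb{Z}[(1+\sqrt{D})/2]$) versus $t$ even (so $D \equiv 0 \pmod 4$ and $K = \mathbb{Q}(\sqrt{D/4})$ after descent), together with careful tracking of which Galois conjugate controls the sign of $y$. A secondary bookkeeping issue is reconciling the counting conventions: Pincus and Washington enumerate positive indices, while the statement counts integral points on $E$, so one must also observe that the index $n = 0$ gives $U_0 = 0$ and thus $x = 0$, which is excluded since then $y^2 = -4$, and that the symmetries $(x,y) \mapsto (\pm x, \pm y)$ collapse each admissible $n$ to a single "integral point" in the sense of the theorem.
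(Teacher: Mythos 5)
Your proposal follows essentially the same route as the paper: both convert integral points on $y^2=(t^2+4)x^4-4$ into norm $-1$ units of $\mathbb{Q}(\sqrt{t^2+4})$ via $\xi=(y+x^2\sqrt{t^2+4})/2$, identify these with odd powers of the fundamental unit so that $x^2=u_j$ for odd $j$, and then invoke the known classification of odd-index squares in the Lucas sequence (which the paper attributes to Cohn and Nakamula--Peth\H{o}, using Pincus--Washington only for the bijection step). The argument is correct; the one cosmetic slip is that $(13,478)$ is the extra point on $y^2=8x^4-4$ itself, whereas Ljunggren's point on $y^2=2x^4-1$ is $(13,239)$.
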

\begin{proof}
      Consider the sequence defined by $u_0 = 0, u_1 = 1$ and $u_{j+2} = tu_{j+1} + u_{j}$ with the above-mentioned restrictions on $t$. We prove the following Lemma, given by {Pincus and Washington} \cite{pincus2024field}.
\begin{lemma}\label{Bijection}
    Let $$S_1:= \left\{u_j = k^2,\ k \in \mathbb{Z} \text{ and } j = \text{odd} \right\}$$ and $$S_2:= \left \{ (x,y) \in \mathbb{Z}^2 \ \text{ such that } \ y^2 = (t^2 +4)x^4 - 4\right \}.$$ Then, there exists a bijection between $S_1$ and $S_2.$
 \end{lemma}
 \begin{proof}

We note that the point $(x,y)$ corresponds to $u_j = x^2$. First, let $\overline{\omega}=(t-\sqrt{t^2+4})/2$ and $u_j=(\omega^j-\overline{\omega}^j)/(\omega-\overline{\omega})$. Now, let $v_j=\omega^j+\overline{\omega}^j$. Then, 
$
(t^2+4)u_j^2+4(\omega\overline{\omega})^j=v_j^2.$
If $j$ is odd and $u_j$ is a square, the point $(\sqrt{u_j}, v_j)$ lies on the curve $E$. Conversely, suppose $(x, y)$ is a point on the curve $E$ with $y\ge 0$. Now comes the most crucial part of the argument. Define
$
\alpha=\dfrac{y+x^2\sqrt{t^2+4}}{2}.
$
When $t$ is odd, $x$ and $y$ have the same parity, therefore $\alpha$ must be an algebraic integer. When $t$ is even, $y$ is even and $(1/2)\sqrt{t^2+4}$ is an algebraic integer, therefore $\alpha$ must be an algebraic integer. \\ \\ \noindent The norm of $\alpha$ is $-1$, which implies that $\alpha=\omega^j$ for some odd $j$. It follows that $x^2=u_j$, and therefore the map is surjective.
     
 \end{proof}
 Since a well-defined bijection exists between $S_1$ and $S_2$, their cardinalities must also be equal. Therefore, to bound the number of integral points on $E$, we need to find perfect squares with odd indices in the Lucas sequence. To do so, we refer to a Lemma by {Cohn} \cite{https://doi.org/10.1112/plms/s3-16.1.153}.
\begin{lemma}
Define $u_j$ as above with $t \neq 2$. If $j$ is odd and $u_j = k^2$ for some $k \in \mathbb{Z}$, then $j = 1.$ When $t = 2$, $j = 1$ and $j = 7.$
\end{lemma}
 \begin{proof}\renewcommand{\qedsymbol}{}
     See {Cohn} \cite{https://doi.org/10.1112/plms/s3-16.1.153} or {Nakamula and Petho} \cite{NakamulaPethő+1998+409+422}.
 \end{proof}
 \noindent The above-mentioned Lemma implies that $|S_1| = |S_2| = 1,$ which concludes the proof. 
 \end{proof} 
 We now demonstrate the widespread applicability of \ref{NewThm} by proving the following Theorem.

\begin{thm}
    If $D = t^2 + 4$ is square free, then $\varepsilon_{D} = (t + \sqrt{t^2 + 4})/2$ is the fundamental unit of $\mathbb{Q}(\sqrt{D}) $ and $\operatorname{Norm}(\varepsilon_{D}) = -1.$
\end{thm}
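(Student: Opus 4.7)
The plan is to establish the statement in three stages: verify $\varepsilon_D$ is an algebraic integer, compute its norm, and show it is the smallest unit in $\mathcal{O}_K^\times$ strictly greater than one. First, I observe that squarefreeness of $D = t^2+4$ forces $t$ to be odd, since any even $t$ would give $4 \mid D$. Combined with $t^2 \equiv 1 \pmod 8$ for odd $t$, this gives $D \equiv 5 \pmod 8$, hence $D \equiv 1 \pmod 4$, so the ring of integers of $K = \mathbb{Q}(\sqrt{D})$ is $\mathcal{O}_K = \mathbb{Z}[(1+\sqrt{D})/2]$. I will check $\varepsilon_D \in \mathcal{O}_K$ using the decomposition $(t+\sqrt{D})/2 = \tfrac{t-1}{2} + \tfrac{1+\sqrt{D}}{2}$, valid since $t$ is odd, and the norm calculation $N(\varepsilon_D) = (t^2-D)/4 = -4/4 = -1$ is one line.

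The substantive task is proving $\varepsilon_D$ generates $\mathcal{O}_K^\times/\{\pm 1\}$. I will argue by contradiction: suppose a unit $\eta \in \mathcal{O}_K^\times$ satisfies $1 < \eta < \varepsilon_D$, and parameterize $\eta = (a+b\sqrt{D})/2$ with $a, b \in \mathbb{Z}$ of the same parity and $a^2 - Db^2 = \pm 4$. Using the relations $\eta + \bar\eta = a$, $\eta - \bar\eta = b\sqrt{D}$, and $|\bar\eta| = 1/\eta < 1$, a short sign analysis will force $b \geq 1$ and then $a \geq 1$.

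The argument then concludes via a case split on $b$. For $b \geq 2$, the crude bound $\eta \geq (1+2\sqrt{D})/2 > \sqrt{D}$ together with $\sqrt{D} > (t+\sqrt{D})/2 = \varepsilon_D$ (which follows immediately from $\sqrt{D} > t$) already contradicts $\eta < \varepsilon_D$. For $b = 1$, the equation $a^2 = t^2+4 \pm 4$ admits only the solution $a = t$ (giving $\eta = \varepsilon_D$ itself, violating strict inequality) together with the factorization $(a-t)(a+t) = 8$ arising from $a^2 = t^2+8$; since $a, t$ are both odd this forces the sporadic $(a,t) = (3,1)$, but the resulting $\eta = (3+\sqrt{5})/2 = \varphi^2$ exceeds $\varphi = \varepsilon_D$, again contradicting $\eta < \varepsilon_D$.

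The step I expect to be most delicate is the sign analysis ruling out $a \leq 0$. Because $N(\eta) = -1$ makes $\bar\eta$ negative, the integer $a = \eta + \bar\eta$ could in principle be small or non-positive, so I must separately handle both choices of sign in $a^2 - Db^2 = \pm 4$ and verify that the resulting $\eta$ with $a \leq 0$ is either at most $1$ or negative. Once this sign bookkeeping is complete, the remaining arithmetic in the case split on $b$ reduces to elementary factoring.
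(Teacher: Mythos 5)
Your proposal is correct. The core it shares with the paper is the standard parameterization of units of $\mathbb{Q}(\sqrt{D})$ as $(a+b\sqrt{D})/2$ with $a\equiv b\pmod{2}$ satisfying the Pell-type equation $a^2-Db^2=\pm 4$, together with the one-line norm computation $N(\varepsilon_D)=(t^2-D)/4=-1$. Where you genuinely go beyond the paper is on minimality: after its parity discussion the paper simply asserts that the case $u=1$ ``certainly implies'' $\varepsilon_D$ is fundamental, offering no argument that no unit lies strictly between $1$ and $\varepsilon_D$. Your contradiction argument --- forcing $b\geq 1$ and $a\geq 1$ from $\eta>1$ and $|\bar{\eta}|=1/\eta<1$, disposing of $b\geq 2$ via $\eta\geq(1+2\sqrt{D})/2>\sqrt{D}>\varepsilon_D$, and for $b=1$ reducing to $a^2=t^2$ or to $(a-t)(a+t)=8$, whose only solution in positive odd integers is $(a,t)=(3,1)$ with $\eta=(3+\sqrt{5})/2=\varphi^2>\varphi=\varepsilon_D$ --- supplies exactly the step the paper omits, so your write-up is the more complete of the two. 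Your opening observation that squarefreeness of $t^2+4$ forces $t$ odd, hence $D\equiv 5\pmod{8}$ and $\mathcal{O}_K=\mathbb{Z}[(1+\sqrt{D})/2]$, is also cleaner than the paper's case analysis, which still carries the possibility $D\equiv 2,3\pmod{4}$ that cannot occur here. One piece of housekeeping: state explicitly that you take $t\geq 1$ without loss of generality (since $D$ depends only on $|t|$), as your sign analysis and the factorization $(a-t)(a+t)=8$ both use $t>0$, and for $t\leq 0$ the quantity $(t+\sqrt{t^2+4})/2$ is not the unit normalized to exceed $1$.
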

\begin{proof}

Since an unit $\varepsilon$ of a real quadratic field $Q(\sqrt{D})$ ($D > 0$ square-free) is an integer whose norm is equal to $\pm 1$, $\varepsilon$ is of the form $\varepsilon = \frac{t + u \sqrt{D}}{2}$; $t \equiv u \pmod{2}$, moreover $t \equiv u \equiv 0 \pmod{2}$ for the special case of $D \equiv 2, 3 \pmod{4}$, and $(t, u)$ satisfies Pell's equation $x^2 - Dy^2 = \pm 4$ because of $\pm 1 = \operatorname{Norm}(\varepsilon) = \frac{t^2 - Du^2}{4}$.
Conversely, if a pair of integers $(t, u)$ satisfies Pell's equation $t^2 - Du^2 = -4$, then clearly $t \equiv u \pmod{2}$ and moreover $t \equiv u \equiv 0 \pmod{2}$ for the special case of $D \equiv 2, 3 \pmod{4}$. For, if we assume $t \equiv u \equiv 1 \pmod{2}$, then we have $t^2 \equiv u^2 \equiv 1 \pmod{4}$, and hence $t^2 - Du^2 = -4$ implies $D \equiv 1 \pmod{4}$. Therefore, $\varepsilon = \frac{t + u \sqrt{D}}{2} = \frac{t + \sqrt{t^2 + 4}}{2}$ is a unit of $\mathbb{Q}(\sqrt{D})$ satisfying $\operatorname{Norm}(\varepsilon) = -1$. Therefore, in the special case of $y = u = 1$, we have $t^2 - D = -4$, which certainly implies that $\varepsilon_D = (t + \sqrt{t^2 + 4})/2$ is the fundamental unit provided $D = t^2 + 4$ is square-free.
\end{proof}

The Theorem implies that \ref{NewThm} holds for values of $t$ whenever $t^2 + 4$ is square-free. We note that such integers have a positive density in the set of integers, which was shown by {Estermann} \cite{Estermann1931} in 1931. Now, we prove that Theorem \ref{NewThm} holds for almost all primes.
\begin{thm}
  Theorem \ref{NewThm} holds for almost all prime values of $t.$
\end{thm}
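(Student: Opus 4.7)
The plan is to bound the exceptional set of primes $p \le N$ for which the fundamental unit of $\mathbb{Q}(\sqrt{p^2+4})$ differs from $(p+\sqrt{p^2+4})/2$, showing it has size $O(N^{1/3}\log N) = o(\pi(N))$. Since the conclusion of Theorem \ref{NewThm} follows from its hypothesis, this establishes the theorem for a density-$1$ set of primes.

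I would first observe that for any integer $t$, the element $\alpha_t := (t+\sqrt{t^2+4})/2$ satisfies $X^2 - tX - 1 = 0$, hence is an algebraic integer of norm $-1$. Writing $D$ for the squarefree part of $t^2+4$, $\alpha_t$ is a unit in $\mathcal{O}_{\mathbb{Q}(\sqrt{D})}$. Since $\alpha_t > 1$ and the fundamental unit $\omega_D > 1$, we have $\alpha_t = \omega_D^j$ for a unique $j \in \mathbb{Z}_{>0}$; multiplicativity of norms forces $\operatorname{Norm}(\omega_D) = -1$ and $j$ odd. The hypothesis of Theorem \ref{NewThm} is exactly $j = 1$, so an exceptional prime corresponds to $\alpha_p = \omega_D^j$ for some odd $j \ge 3$.

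For the counting, fix $j \ge 3$ odd. Any fundamental unit $\omega$ of norm $-1$ with $\omega^j \le N+1$ can be written $\omega = (a+b\sqrt{D})/2$ with $a, b \in \mathbb{Z}_{>0}$ and $a^2 - Db^2 = -4$. Because $\bar\omega\omega = -1$, we have $a = \omega + \bar\omega = \omega - \omega^{-1} \in (0, \omega)$. Conversely, for each positive integer $a$, the equation $a^2+4 = b^2 D$ with $D$ squarefree uniquely determines $(b, D)$; so the fundamental units $\omega \le (N+1)^{1/j}$ are parameterized by $a \in \{1, \ldots, \lfloor (N+1)^{1/j}\rfloor\}$, contributing at most $(N+1)^{1/j}$ candidates, each yielding at most one exceptional prime via $p = \omega^j + \bar\omega^j$. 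Since $\phi = (1+\sqrt{5})/2$ is the smallest fundamental unit of norm $-1$ across all real quadratic fields, $\omega^j \le N+1$ forces $j = O(\log N)$, so
\[
\#\{\text{exceptional primes } p \le N\} \le \sum_{\substack{j \ge 3 \text{ odd} \\ j = O(\log N)}} (N+1)^{1/j} = O(N^{1/3}\log N),
\]
dominated by the $j = 3$ term.

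The main obstacle is the clean bookkeeping when counting fundamental units of norm $-1$ of bounded size; the key trick is the parameterization $a = \omega - \omega^{-1}$, which reduces the problem to counting integers in a short interval and sidesteps any need for deep inputs like effective regulator bounds or the Brauer--Siegel theorem. A minor verification is that the map $(\omega, j) \mapsto p = \omega^j + \bar\omega^j$ is injective, which is immediate since $\alpha_p$ determines $p$; and that $\omega_D \ge \phi$ uniformly over real quadratic $D$ with $\operatorname{Norm}(\omega_D) = -1$, which is a straightforward finite check on the smallest such fields (or may be replaced by any uniform lower bound $c > 1$).
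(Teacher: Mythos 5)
Your proof is correct, but it takes a genuinely different route from the paper. The paper's argument rests on a lemma of Katayama, which gives an exact characterization of the exceptional primes: $(p+\sqrt{p^2+4})/2$ fails to be the fundamental unit of $\mathbb{Q}(\sqrt{p^2+4})$ precisely when $p$ is an odd-indexed Lucas number $L_{2n+1}$ with $n\ge 1$; since Lucas numbers grow exponentially, there are only $O(\log N)$ such primes up to $N$, and the density computation finishes the proof. You instead bypass Katayama entirely: you observe that an exceptional prime means $\alpha_p=\omega_D^{\,j}$ with $j\ge 3$ odd, and you count all pairs $(\omega,j)$ directly by parameterizing norm-$(-1)$ fundamental units via their trace $a=\omega-\omega^{-1}<\omega$, with $(b,D)$ recovered uniquely from $a^2+4=b^2D$ and $D$ squarefree; together with the uniform lower bound $\omega_D\ge(1+\sqrt5)/2$ this gives $O(N^{1/3}\log N)$ exceptional primes, which is $o(\pi(N))$. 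Your bound is quantitatively much weaker than the $O(\log N)$ the paper gets from Katayama (which effectively says only $D=5$ can contribute), but it is entirely self-contained and elementary, avoiding the cited lemma altogether; it also sidesteps the somewhat garbled chain of inequalities in the paper's density estimate (e.g.\ the step $\rho(N)\le\pi(2N+1)\le\pi(2\nu+1)$, which is not literally correct as written). Both arguments correctly reduce the theorem to showing the hypothesis of Theorem \ref{NewThm} holds off a density-zero set of primes.
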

\begin{proof}
 Begin by defining the Lucas sequence to be the sequence with the following conditions. $L_0 = 2, L_1 = 1$ and $L_n = L_{n-1} + L_{n-2}$ for $ n > 1.$ Now, generalizing Binet's formula to the Lucas sequence, we get the following relation. $$L_n = \varphi^{n} + (1 - \varphi)^n =  \left( \frac{1 + \sqrt{5}}{2}\right)^n + \left(\frac{1- \sqrt{5}}{2} \right)^n.$$ Let $P_1 :=  \left \{  \text{p: primes such that $p = L_{2n+1}$}, n \geq1 \right \}$. The infinitude (or lack thereof) of $P_1$ is an open problem, but we shall show that that there are infinitely many primes $p \notin P_1.$ For any $N > 0,$  let $$\rho(N) = \text{ number of primes $p$ such that $p \in P_1$ and $p \le N$},$$ and $$ \pi(N) =  \text{ number of primes $p \leq N$}.$$ Now, for any $N > 0$, let $\nu$ be the real number  $ \frac{\left(\log_{\varepsilon}N - 1 \right)}2$ where $\varepsilon = \left(\frac{1 + \sqrt{5}}{2} \right)$. Let $n$ denote the only integer such that $n  \leq \nu < n+1$ holds. Then, $ \nu_{2n+1}$  satisfies the inequalities $$ \nu_{2n+1}  = \varepsilon^{2n+1} + \overline{\varepsilon}^{2n+1} < \varepsilon^{2n+1} \leq \varepsilon^{2 \nu + 1} = N < \varepsilon^{2(n+1) + 1}.$$ Now, note that if $\varepsilon^{2n+1} + \overline{\varepsilon}^{2n+1}$ is prime, then $\varepsilon + \overline{\varepsilon}$ and $2n+1$ are both prime. This implies that $$ \rho(N) \leq  \pi(2N+1) \leq \pi(2 \nu + 1)  = \pi(\log_{\varepsilon} N).$$ From the PNT, we have $$\pi(n) \sim \frac{n}{\log(n)} \implies \pi(\log_{\varepsilon}) \sim \frac{\log_{\varepsilon} N}{\log \log_{\varepsilon} N}.$$ Lastly, we have 
 $$ 0 < \lim_{N \to \infty} \frac{\rho(N)}{\pi(N)} \leq \lim_{N \to \infty} \frac{\pi(\log_{\varepsilon} N)}{\pi(N)} \leq \frac{1}{\log \varepsilon } \lim_{N \to \infty} \frac{(\log N)^2}{N \log \log_{\varepsilon} N} \leq \frac{1}{\log \varepsilon} \lim_{N \to \infty} \frac{\log(N)^2}{N}= 0.
$$ Therefore, we have $$ \lim_{N \to \infty} \frac{\rho(N)}{\pi(N)} = 0,$$
which implies that there are infinitely many primes $p \notin P_1$. Now, we have the following Lemma by {Katayama} \cite{Katayama1991OnFU}.
\begin{lemma}
    For any prime $p \notin P_1$, $\omega = \left(p + \sqrt{p^2 + 4}\right)\slash2$ is the fundamental unit of the real quadratic field $\mathbb{Q} (\sqrt{p^2 + 4}) $.
\end{lemma}
This shows that \ref{NewThm} holds for almost all primes.
\end{proof}  
\begin{cor}
    Let \ $t^2 + 4 = dz^2$ for $d \in \left\{1, 2, 3,6 \right\}$. Furthermore, let $\omega = \left( t + \sqrt{t^2 + 4}\right)/2$ be the fundamental unit of the quadratic field $\mathbb{Q}(\sqrt{t^2 + 4})$. Then, the elliptic curve $E:= y^2 =  d^2(t^2 + 4)x^4 - 4$ has no integral solution. 
\end{cor}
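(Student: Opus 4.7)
The plan has two phases. Phase one is a case analysis that reduces the three hypotheses ($t^2+4 = dz^2$, $d \in \{1,2,3,6\}$, and $\omega$ fundamental) to a single configuration. For $d = 3$ or $d = 6$, the relation $t^2+4 = dz^2$ forces $t^2 \equiv -4 \equiv 2 \pmod{3}$, which is impossible since squares mod $3$ lie in $\{0,1\}$; both cases are vacuous. For $d = 1$, the only integer solution of $t^2+4 = z^2$ is $t = 0$, but then $\mathbb{Q}(\sqrt{t^2+4}) = \mathbb{Q}$ is not a quadratic field and the fundamental-unit hypothesis fails vacuously. So $d = 2$. Parity of $t^2 + 4 = 2z^2$ forces $t = 2s$, $z = 2r$, reducing matters to the negative Pell equation $s^2 - 2r^2 = -1$, whose full solution set is $s + r\sqrt{2} = \pm(1+\sqrt{2})^{2k+1}$ for some $k \in \mathbb{Z}$. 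Requiring $\omega = s + r\sqrt{2}$ to be the fundamental unit of $\mathbb{Q}(\sqrt{2})$---which is $1 + \sqrt{2}$---pins down $k = 0$, giving the unique configuration $(t,d,z) = (\pm 2, 2, \pm 2)$ and the single curve $E: y^2 = 32x^4 - 4$ to consider.

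Phase two shows $E(\mathbb{Z}) = \emptyset$ via a parity obstruction in $\mathbb{Z}[\sqrt{2}]$. Any integer point $(x,y)$ has $y$ even, since $32x^4 - 4 \equiv 0 \pmod{4}$. Setting $\alpha := y/2 + 2x^2\sqrt{2} \in \mathbb{Z}[\sqrt{2}]$ one computes $N(\alpha) = y^2/4 - 8x^4 = -1$. Hence $\alpha \in \mathbb{Z}[\sqrt{2}]^{\times} = \{\pm(1+\sqrt{2})^j : j \in \mathbb{Z}\}$, and the norm condition forces $j$ to be odd. Writing $(1+\sqrt{2})^j = a_j + b_j\sqrt{2}$, the recurrences $a_{j+1} = a_j + 2b_j$ and $b_{j+1} = a_j + b_j$ yield by induction that $a_j$ is always odd and $b_j \equiv j \pmod{2}$. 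Matching coefficients of $\sqrt{2}$ in $\alpha = \pm(1+\sqrt{2})^j$ gives $\pm b_j = 2x^2$, impossible when $j$ is odd since then $b_j$ is odd while $2x^2$ is even. Hence $E(\mathbb{Z}) = \emptyset$, proving the corollary. Equivalently, this says that no odd-indexed term of the Lucas sequence associated to $t = 2$ in Lemma \ref{Bijection} can be twice a square, since all such terms are odd.

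The main obstacle is the completeness of Phase one: one must combine the three constraints carefully to conclude that only $(t,d,z) = (\pm 2, 2, \pm 2)$ survives. In particular, the step that the fundamental-unit hypothesis collapses the Pell family to its minimal solution needs justification. Once the reduction is in place, Phase two is essentially a one-line congruence argument modulo $2$ in $\mathbb{Z}[\sqrt{2}]$, and no deep input from Theorem \ref{NewThm} is needed beyond the unit-group structure of $\mathbb{Z}[\sqrt{2}]$.
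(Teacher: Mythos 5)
Your proposal is correct, but it takes a genuinely different route from the paper's. The paper's proof extends Lemma \ref{Bijection}: for odd $j$ one has $v_j^2=(t^2+4)u_j^2-4$, so terms $u_j=dk^2$ with $j$ odd correspond to integral points $(k,v_j)$ on $y^2=d^2(t^2+4)x^4-4$, and the nonexistence of such terms is then quoted from Nakamula and Peth\H{o}'s classification of the equations $u_j=dx^2$ for $d\in\{1,2,3,6\}$. You instead observe that the hypotheses collapse the statement to a single curve: $t^2+4=dz^2$ is insoluble mod $3$ for $d=3,6$, forces the degenerate $t=0$ for $d=1$, and for $d=2$ reduces to the negative Pell equation $s^2-2r^2=-1$, whose minimal solution is the only one compatible with $\omega$ being fundamental; this leaves $y^2=32x^4-4$, which you dispose of by a unit computation in $\mathbb{Z}[\sqrt{2}]$ (after writing $y=2w$, the bare congruence $w^2=8x^4-1\equiv 7\pmod 8$ would do even faster). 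Your argument is elementary and self-contained, avoiding the deep input from Cohn and Nakamula--Peth\H{o}, at the cost of revealing that the corollary as stated concerns exactly one curve. It is worth noting that your use of the hypothesis $t^2+4=dz^2$ is not optional: the paper's own proof never visibly invokes it, yet without it the conclusion fails --- for $(t,d)=(1,2)$ one has $u_3=2=2\cdot 1^2$, giving the integral point $(1,4)$ on $y^2=20x^4-4$ even though $(1+\sqrt{5})/2$ is the fundamental unit of $\mathbb{Q}(\sqrt{5})$. So your phase-one reduction supplies a step that the paper's sketch actually needs.
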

\begin{proof}\label{StrongBijection}
Utilizing the arguments in Lemma \ref{Bijection}, we realize that there exists a bijection between $$S_3:= \left\{u_j = dk^2,\ k \in \mathbb{Z} \text{ and } j = \text{odd} \right\}$$ and $$ S_4 := \left\{(x,y) \in \mathbb{Z}^2 \text{ such that } y^2 = d^2(t^2 + 4)x^4 - 4 \right\}.$$ The result then follows from the work of {Nakamula and Petho} \cite{NakamulaPethő+1998+409+422}. 
\end{proof}
\section{Lower Bounds for Number of Integral Points}
In this section, we improve upon the state of the art lower bound for the number of integral solutions on Mordell curves by utilizing an argument of {Silverman} \cite{Silverman1983}. Let $N_{F}(m)$ denote the number of integral solutions for the binary cubic form $F(x,y) = m.$ We prove that $N_F(m) > c \log (m)^{17/19}$ for a constant $c$ and for infinitely many integers $m.$ This improves upon the previous state of the art bound, which was given by {Stewart}\cite{numdam} and states that $N_F(m) > c \log (m)^{0.85714\ldots}.$

We utilize the arguments given in \cite{numdam} and begin by proving an important lemma which will lend us to the result almost directly.
\begin{lemma}[{Silverman's Theorem}]\label{Silverman's Theorem}
    Let $F$ be a binary cubic form with integer coefficients and non zero discriminant. Let $m_0 $ be a non-zero integer such that the curve $E : F(x,y) = m_0 z^3$ has a point over $\mathbb{Q}$. Using that point as origin, we give the $F$ the structure of an elliptic curve. Denote $r$ to be the Mordell-Weil rank of the elliptic curve over $\mathbb{Q}$, then there exists a constant $c$ which depends on $F$ such that $N_{F} > c \log{(m)}^{r/r+2}$ for infinitely many integers $m.$
\end{lemma}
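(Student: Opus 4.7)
My plan is to use the Mordell--Weil theorem as the engine for producing many rational points on $E$, translate these into integer solutions of $F(X,Y)=m$ for various $m$, and then apply a Dirichlet-box pigeonhole argument to force many of them onto a common $m$. Choose free generators $P_1,\dots,P_r$ of $E(\mathbb Q)$ modulo torsion. For each lattice vector $\vec n=(n_1,\dots,n_r)\in\mathbb Z^r$ with $\max|n_i|\le B$, set $P_{\vec n}=\sum_i n_i P_i$; this yields $(2B+1)^r\sim B^r$ distinct rational points on $E$. By N\'eron--Tate theory, $\hat h(P_{\vec n})=Q(\vec n)+O(1)\le c_1 B^2$ for the positive-definite height pairing $Q$, and using $h=\hat h+O(1)$ each $P_{\vec n}$ admits a unique coprime projective integer representative $[x_{\vec n}:y_{\vec n}:z_{\vec n}]$ on the model $F(X,Y)=m_0 Z^3$ with coordinates bounded by $e^{c_2 B^2}$. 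In particular $F(x_{\vec n},y_{\vec n})=m_0 z_{\vec n}^3=:m_{\vec n}$, so each $\vec n$ produces an integer solution of $F=m_{\vec n}$ with $\log|m_{\vec n}|\le 3c_2 B^2$.

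Distinct $\vec n\ne\vec n'$ give distinct rational points $P_{\vec n}\ne P_{\vec n'}$ and hence distinct pairs $(x_{\vec n},y_{\vec n})\ne(x_{\vec n'},y_{\vec n'})$; whenever they share $z_{\vec n}=z_{\vec n'}$ they contribute two distinct integer solutions of the \emph{same} equation $F(X,Y)=m$. The main step is then to partition the $B^r$ lattice vectors by their $z$-coordinate, pigeonhole to find a fiber of size $\gtrsim B^{2r/(r+2)}$, and --- using $\log m\asymp B^2$ --- conclude that $N_F(m^{*})\ge c(\log m^{*})^{r/(r+2)}$ for the common value $m^{*}$ on that fiber. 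Since Siegel's theorem guarantees $N_F(m)$ is finite for each fixed $m$ while the fiber sizes grow with $B$, the values $m^{*}$ cannot stabilize, producing infinitely many such $m$ as $B\to\infty$.

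\textbf{Main obstacle.} The crux --- and the source of the specific exponent $r/(r+2)$ --- is the density estimate: one must show that the image of $\vec n\mapsto z_{\vec n}$ has cardinality $O(B^{r^2/(r+2)})$ rather than the trivial $e^{O(B^2)}$ coming from the height bound alone. A pigeonhole with $B^r$ points and $D$ possible $m$-values gives some $m^{*}$ with $\ge B^r/D$ preimages, and matching this to the target $B^{2r/(r+2)}$ yields exactly $D\le c^{-1}B^{r^2/(r+2)}$. I expect this polynomial density bound to follow from decomposing the canonical height into local N\'eron components and using the explicit addition formulas for $E\subset\mathbb P^2$ to control the growth of denominators in $\sum_i n_i P_i$: divisibility constraints at each finite prime force the coprime $z_{\vec n}$ into sparse arithmetic families whose combined size grows polynomially in $B$, with exponent governed by the rank $r$. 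Establishing this quantitative control is the main technical content of Silverman's argument; once it is in hand, the pigeonhole calculation above delivers the lower bound $N_F(m)>c(\log m)^{r/(r+2)}$ for infinitely many integers $m$, completing the proof.
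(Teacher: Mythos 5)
There is a genuine gap, and it sits exactly where you flag it: the density estimate $D\ll B^{r^2/(r+2)}$ for the number of distinct values of $\vec n\mapsto m_{\vec n}=m_0z_{\vec n}^3$ is not a technical afterthought --- it \emph{is} the theorem, and you leave it as an expectation. Worse, the estimate you need is essentially false: distinct points $\sum_i n_iP_i$ on an elliptic curve generically have pairwise distinct denominators (this is the elliptic--divisibility--sequence phenomenon; primitive divisors guarantee new primes in $z_{nP}$ for all large $n$), so the map $\vec n\mapsto z_{\vec n}$ is typically injective off a negligible set, the fibers are singletons, $D$ is comparable to the full count $B^r$, and the pigeonhole returns only $N_F(m^*)\ge 1$. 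Decomposing the canonical height into local N\'eron components controls the \emph{size} of $z_{\vec n}$, not the number of coincidences among them, so no amount of pressure on that lemma will produce the collision rate your plan requires.

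The actual mechanism (Silverman's) does not wait for points to collide on a common $m$; it forces them onto one by exploiting the weighted homogeneity $F(tX,tY)=t^3F(X,Y)$. Take the $k\asymp H^{r/2}$ rational points of canonical height at most $H$, write them in lowest terms as $[a_i:b_i:d_i]$ with $F(a_i,b_i)=m_0d_i^3$, and set $d=\operatorname{lcm}(d_1,\dots,d_k)$. Then $\bigl(a_id/d_i,\,b_id/d_i\bigr)$ are $k$ distinct \emph{integral} solutions of the single equation $F(X,Y)=m_0d^3$. The price is that $\log(m_0d^3)\le 3\sum_i\log d_i\ll kH\asymp H^{(r+2)/2}$, while the gain is $N_F(m_0d^3)\ge k\asymp H^{r/2}$; eliminating $H$ gives precisely $N_F(m)\gg(\log m)^{r/(r+2)}$, and finiteness of $N_F(m)$ for each fixed $m$ (Thue) forces infinitely many values of $m$ as $H\to\infty$. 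Your height bookkeeping (the count $H^{r/2}$, the bound $\log d_i\ll H$, the use of finiteness to get infinitude) is all correct and reusable, but the combinatorial engine must be replaced by this scaling trick. Be aware also that the proof printed in the paper for this lemma establishes none of this: it only records how discriminants of binary forms transform under $\operatorname{GL}_2(\mathbb{Z})$ and under scaling, and defers the substance to the cited references, so it is not a source for the missing step.
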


\begin{proof}
Let \( A = \begin{pmatrix} a & b \\ c & d \end{pmatrix} \), with \( a, b, c, d \in \mathbb{Z}\). Let \( F \) be a binary form with integer coefficients, degree $n \geq 2$ and non-zero discriminant $D(F)$. We define the binary form $F_{A}$ as 
\begin{equation*}
    F_{A}(x,y) = F(ax + by, cx + dy).
\end{equation*}
We note that 
\begin{equation*}\label{4.1}    
D(F_{A}) = (\det A)^{n \cdot n-1} D(F).
\end{equation*}
Further, for any non negative $t \in \mathbb{Z}$, we have \begin{equation*}
D(t F) = t^{(2n-2)} D(F).
\end{equation*}
Now if $A$ is equivalent to some $\Gamma \in GL(2, \mathbb Z)$, then $\det(A) = \pm 1$, and if for some $(x,y)$ we have $F(x,y) = m$, then $A(x,y) = (ax +by, cx + dy)$ is a solution of $F_{A^{-1}} (X,Y) = m$ for $(X,Y) \in \mathbb{Z}^2.$ Now we have the identity \begin{equation*}
    D({F_{A^{-1}}}) = D(F) \quad \text{ for all } A = \Gamma \in GL(2, \mathbb Z).
\end{equation*}

Lastly, we note that if $kF$ has integer coefficients for some $k$, then $D(F) \neq 0 \implies D(kF) \neq 0$, and the number of solutions of $kF(x,y) = km $ is the trivially the same as the number of solutions of $F(x,y) =m.$  
\end{proof}
We now turn our attention towards proving the main theorem  of this section, which utilizes the correspondence between binary cubic forms and Mordell equations, as highlighted before in \ref{BinaryCubic}.

\begin{thm}
Let $r$ be a positive integer which is the rank of a Mordell-Weil group of rational points of the elliptic curve $E: y^2 = x^3 + D$. Then, there exist infinitely many inequivalent binary cubic forms $F$ with integer coefficients, content $1$ and non zero discriminant for which there is a positive number $c$ depending on $F$ such that $N_{F}(m) > c \log(m)^{r/r+2}$ for infinitely many integers $m.$
\end{thm}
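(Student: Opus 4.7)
The plan is to combine the binary cubic form--Mordell curve correspondence of Theorem~2.1 with Silverman's Theorem (Lemma~\ref{Silverman's Theorem}), constructing the desired infinite family of forms from the given Mordell curve $E$ via its sextic twists.

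First I would fix the Mordell curve $E: y^2 = x^3 + D$ of rank $r$ and select a non-torsion rational point $P = (X_0, Y_0) \in E(\mathbb{Q})$ (which exists because $r \geq 1$). Writing $X_0 = a/b^2$ in lowest terms, for any positive integer $n$ divisible by $b$ the sextic twist $E_n: y^2 = x^3 + D n^6$ is $\mathbb{Q}$-isomorphic to $E$ via $(x,y) \mapsto (x/n^2, y/n^3)$, so $E_n$ has the same rank $r$, and the image $(n^2 X_0, n^3 Y_0)$ of $P$ is an \emph{integral} point of $E_n$. This gives a supply of integral points on rank-$r$ Mordell curves with discriminants tending to infinity.

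Next, by Theorem~2.1, each such integral point yields a binary cubic form $F_n = a_n x^3 + 3 b_n x^2 y + 3 c_n x y^2 + d_n y^3$ of discriminant $-108 D n^6$ together with integers $(x_n, y_n)$ satisfying $F_n(x_n, y_n) = 1$. Dividing by the content $t_n$ produces a primitive form $F_n^{\ast}$ of content $1$; by the scaling identity $D(tF) = t^4 D(F)$ for binary cubic forms (established in the proof of Lemma~\ref{Silverman's Theorem}), $F_n^{\ast}$ has discriminant $-108 D n^6 / t_n^{4}$. Provided $t_n$ grows more slowly than $n^{3/2}$ (which holds for all sufficiently large $n$), these primitive discriminants tend to infinity, so the $F_n^{\ast}$ split into infinitely many $GL_2(\mathbb{Z})$-equivalence classes. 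With the rational point $(x_n : y_n : 1)$ as origin, the projective plane cubic $F_n^{\ast}(x,y) = z^3$ acquires the structure of an elliptic curve which, under the birational correspondence underlying Theorem~2.1, is $\mathbb{Q}$-isomorphic to $E_n$ and hence also has Mordell--Weil rank $r$. Applying Lemma~\ref{Silverman's Theorem} with $m_0 = 1$ to each $F_n^{\ast}$ then produces a positive constant $c_n$ (depending on $F_n^{\ast}$) such that $N_{F_n^{\ast}}(m) > c_n \log(m)^{r/(r+2)}$ for infinitely many integers $m$, as required.

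The main obstacle will be controlling the content $t_n$: to guarantee infinitely many genuinely inequivalent primitive forms one needs a uniform polynomial upper bound on $t_n$ in terms of $n$, which amounts to tracking the explicit coefficients of $F_n$ arising from the integral point $(n^2 X_0, n^3 Y_0)$ through the Hessian and Jacobian recipe in the proof of Theorem~2.1. Once this bookkeeping is carried out, the normalized discriminants $-108 D n^6/t_n^{4}$ are forced to be unbounded, ruling out all but finitely many accidental $GL_2(\mathbb{Z})$-equivalences and completing the argument.
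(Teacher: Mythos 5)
Your proposal is correct in outline but reaches the required family of forms by a genuinely different route than the paper. The paper fixes $E: y^2 = x^3 + D$ and varies the rational point $P=(s,t)$ over the infinite set $E(\mathbb{Q})$, attaching to each point the form $F(x,y) = x^3 - 3sx^2y - 4Dy^3$ of discriminant $-432Dt^2$; since $s,t$ are only rational it must then clear denominators (multiplying by $b = s_2/(3,s_2)$) to get integral forms of content $1$, work with the curve $F(x,y) = z^3/(2t)$ rather than $F(x,y)=z^3$, and verify that the covariant map $\lambda([x,y,z]) = [zH/9,\, G/54,\, z^3]$ is a nonconstant morphism, hence a rank-preserving isogeny, before invoking Silverman's lemma; the inequivalence of the forms comes from the unboundedness of the discriminants $-432b^4t^2D$. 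You instead fix one rational point, push it to an integral point $(n^2X_0, n^3Y_0)$ on the isomorphic curves $E_n: y^2 = x^3 + Dn^6$, and use the correspondence of Theorem 2.1 to get integral forms of discriminant $-108Dn^6$ representing $1$, with $m_0=1$ throughout. This buys a real simplification that you did not fully exploit: your self-declared ``main obstacle'' of controlling the content $t_n$ is vacuous, since the content of an integral form divides every value the form represents, so $F_n(x_n,y_n)=1$ forces $t_n=1$ outright, and the discriminants $-108Dn^6$ are then pairwise distinct, which already gives infinitely many $GL_2(\mathbb{Z})$-inequivalence classes. The one step where you are too glib is the rank transfer: the plane cubic $F_n(x,y)=z^3$ is in general not $\mathbb{Q}$-isomorphic to $E_n$ but rather maps onto it by the degree-$3$ covariant map coming from the syzygy $4H^3 = G^2 + 27 D_F F^2$; this is an unramified map of genus-one curves, hence an isogeny after translating origins, which is precisely the content of the paper's lemma on $\lambda$. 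Since isogeny still preserves the Mordell--Weil rank, your conclusion stands once ``isomorphic'' is replaced by this isogeny argument, and applying Silverman's lemma to each $F_n$ then yields the theorem.
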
 
\begin{proof}

We utilize a proof of a flavour similar to the one in \ref{BinaryCubic}. In particular, we utilize the syzygy between the covariants of a binary cubic form, adapting it slightly to fit our model. 

Let $P= (s,t)$ be a rational point on $E$ such that $st \neq 0$, then $$F(x,y) = x^3 - 3sx^2y-4Dy^3,$$ with discriminant $D(F) = -432Dt^2$. Let $C$ be the curve $C: F(x,y) = 1/2t$ which is non singular since $st \neq 0.$ We now set $Q  = (-s/t, -1/2t)$ as a rational point on $C$. Taking $Q$ as origin, we realize that $C$ takes the form of an elliptic curve. Now, utilizing the syzygy for the covariants of a binary cubic form, we get $$4 H(x, y)^3=G(x, y)^2+27 D F(x, y)^2.$$ Modifying this equation slightly, we get $$(4G)^2 = (4H)^3 + (432t)^2 DF^2$$ where $$H(x, y) = 9(s^2 x^2 + 4D xy - 4s Dy^2)$$ and $$G(x, y) = 54 \left((s^3 + 2D)x^3 - 6sD x^2 y + 12s^2 D x y^2 + 8D^2 y^3 \right).$$
Now that we have our syzygy sorted, we wish to utilize it somehow to get a possible relation between our curve $C$ and an elliptic curve $E$ using the covariants. This can be achieved by cleverly defining $C: F(x,y) = z^3/2t$ and $E: zy^2 = x^3 + Dz^3$. We now define the mapping  $$ \lambda: C \mapsto E \textrm{ where } \lambda([x, y, z]) = \left(\left[ z H(x, y) / 9, G(x, y) / 54, z^3 \right]\right).$$ The most crucial part of the argument lies here, where we wish to show that $\lambda$ is a non-constant morphism, and is therefore an isogeny with a degree, which allows us to effectively preserve the Mordell-Weil rank. We thus prove the following lemma: 

\begin{lemma}
    The mapping $\lambda$ is a non constant morphism, and therefore is an isogeny relating $C \mapsto E$ while preserving the Mordell-Weil rank. 
\end{lemma}
\begin{proof}
    When $z \neq 0 \text{ or } G(x,y) \neq 0$, we realize that $\lambda$ is regular. Now, when $z = 0 \text{ and } G(x,y) = 0,$  then $F(x,y) =0 $ and by our syzygy, $H(x,y) = 0$. We now realize that $\mathrm{Res}( \frac{H(X, Y)}{9}, \frac{F(X,Y)}{1}) \neq 0$, and hence $\lambda$ is a non constant morphism. This implies that $\lambda$ is an isogeny between the elliptic curves $C$ and $E$ such that the rank over $\mathbb{Q}$ is preserved. 
    \end{proof}
Furthermore, we note that if $Q$ is the origin of the elliptic curve $C$, then $\lambda(Q) = [s, -t, 1]$ is the orgin of the elliptic curve $E$. Now, the last part of proof hinges upon our ability to find binary cubic forms of the same rank as $F(x,y)$, and to prove that there are infinitely many such forms. Let $s = s_1/s_2$ and $t = t_1/t_2$ such that $\gcd(s_1,s_2) = 1$ and $s_2 > 0$ and $\gcd(t_1, t_2) = 1$ with $t_2 > 0$. We now associate $F'$ to be the binary cubic form associated with $F$ via a rank preserving argument, and then show that there can be infinitely many inequivalent forms $F'$. 
\begin{lemma}
    There are infinitely many inequivalent $F'$ such that $\operatorname{rank}F' = \operatorname{rank}F$ with content $1$.
\end{lemma}
\begin{proof}\renewcommand{\qedsymbol}{}

Set $(s,t) = (s_1/s_2, t_1,t_2)$ where $\gcd(s_1,s_2) = \gcd(t_1,t_2) = 1$ and $s_2 \neq 0$ and $t_2 \neq 0.$ We now set $b = s_2/(3,s_2)$ and get $F'(x,y) = b F(x,y).$ Finally, we show that the $\Delta(F') = -432b^4 t^2D, $ then we can simply associate the curve $C_1: F'(x,y) = m_0 z^3$ where $m_0 \neq 0$ and where the content of $F'$ is $1.$ Now utilizing \ref{Silverman's Theorem}, we get $N_{F} (m) > c \log(m)^{r/r+2}$ for infinitely many positive integers $m.$ Lastly, the fact that there are infinitely many inequivalent forms $F'$ can be proved using the fact that there can exist forms $F'$ with discriminants of arbitrarily large absolute value with points associated on $E$, completing our proof.
\end{proof}
\end{proof}
We are now ready to improve upon the previous state of the art lower bound, and utilize the above-mentioned argument to prove the main theorem of this section. 
\begin{thm}
    $N_{F}(m) > c \log{(m)}^{17/19}$ for a constant $c$ for infinitely many $m.$
\end{thm}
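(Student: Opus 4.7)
The plan is to apply the preceding theorem with a Mordell curve of sufficiently high rank. Since the preceding theorem yields an exponent of $r/(r+2)$ in the logarithm, I would first solve $r/(r+2) = 17/19$, obtaining $r = 17$. Thus, it suffices to exhibit an integer $D$ for which the Mordell curve $E: y^2 = x^3 + D$ has Mordell-Weil rank at least $17$ over $\mathbb{Q}$. Once such a $D$ is produced, the preceding theorem directly furnishes infinitely many inequivalent binary cubic forms $F$ of content $1$ with $N_F(m) > c \log(m)^{17/19}$ for infinitely many $m$.

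The second step is to exhibit or cite such a $D$. This is the essential input: following the hint in the abstract about using ``a pair of curves with unusually high rank,'' I would cite the explicit high-rank Mordell curves of Elkies (which attain rank $\geq 17$ for specific large integers $D$), giving a concrete Mordell curve to feed into the machinery of the previous theorem. The binary cubic form $F$ to which we apply the theorem is then the form $F(x,y) = x^3 - 3sx^2 y - 4D y^3$ associated in the previous proof to a rational point $(s,t)$ of $E$ with $st \neq 0$, rescaled by $b = s_2 / (3, s_2)$ to guarantee content $1$.

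The third step is a short comparison confirming the improvement: since $r/(r+2)$ is strictly increasing in $r$, and Stewart's bound corresponds to $r = 12$ (giving exponent $12/14 = 6/7 \approx 0.857$), the rank-$17$ input improves the exponent to $17/19 \approx 0.894$. No further analytic work is required beyond plugging $r = 17$ into the preceding theorem.

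The main obstacle is genuinely external to the proof technique: it is the existence of a Mordell curve of rank $\geq 17$ over $\mathbb{Q}$. This has to be established by independent means, namely by exhibiting $17$ explicit rational points on $y^2 = x^3 + D$ for some concrete $D$, and certifying their $\mathbb{Z}$-linear independence (e.g., by computing canonical heights to sufficient precision and checking that the regulator is nonzero). All of this is imported from the literature; once it is granted, the chain from the syzygy $4H^3 = G^2 + 27 D F^2$ through the isogeny $\lambda: C \to E$ constructed earlier transports the rank lower bound on $E(\mathbb{Q})$ into the claimed lower bound for $N_F(m)$.
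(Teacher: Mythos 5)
Your proposal matches the paper's proof: both cite Elkies's explicit Mordell curves of rank $17$ (the paper uses the $3$-isogenous pair $y^2 = x^3 - b$, $y^2 = x^3 - 27b$ with $b = -908800736629952526116772283648363$) and then set $r = 17$ in the preceding rank-to-exponent theorem to obtain the exponent $17/19$. The argument is correct and essentially identical to the paper's.
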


\begin{proof}

{Elkies}\cite{442487} found a $3$ isogenous pair of Mordell curves $y^2 = x^3 - b, y^2 = x^3 - 27b$ with $\mathrm{rank} = 17$ for $b = -908800736629952526116772283648363.$ Setting $r = 17$ in \ref{Silverman's Theorem}, we get $N_{F}(m) > c \log{(m)}^{17/19}.$
\end{proof}
We note that if the rank $r$ is unbounded for Mordell curves, i.e., as $r \to \infty$ then we have the bound $N_{F}(m) > c \log{m}$ for infinitely many $m.$ We also obtain the following corollary. 

\begin{cor}
    Let $E:= y^2 = x^3 + D$ be a Mordell curve, then $N(E) > c \log{(|D|)}^{17/19}$ for an absolute constant $c$ and for infinitely many $D.$
\end{cor}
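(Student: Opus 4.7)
The plan is to deduce this corollary from the preceding theorem by applying the binary-cubic-to-Mordell correspondence of Section~\ref{BinaryCubic} to translate an abundance of integer solutions of $F(x,y) = m$ into an abundance of integer points on an associated Mordell curve, for the specific form $F$ produced by the Elkies rank-$17$ construction.

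First I would fix the binary cubic form $F(x,y) = x^3 - 3sx^2y - 4Dy^3$ built in the previous theorem from the Elkies rank-$17$ Mordell curve $y^2 = x^3 + D$ at its rational point $P = (s,t)$; its discriminant is $-432Dt^2$, so setting $k := 4Dt^2$ we have $\disc(F) = -108k$. Next I would specialise the covariant syzygy $4H_1(x,y)^3 = G_1(x,y)^2 + D_1 F(x,y)^2$ (with $D_1 = -4k$) along the level set $F(x,y) = m$. This reads $(G_1(x,y)/2)^2 = H_1(x,y)^3 + km^2$, so the assignment $(x,y) \mapsto (X,Y) := (H_1(x,y),\, G_1(x,y)/2)$ sends each integer solution of $F(x,y) = m$ to an integer point on the Mordell curve $E_{D'}: Y^2 = X^3 + D'$ with $D' = km^2$.

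The main technical point is checking that this assignment is boundedly many-to-one. Suppose $(x_1,y_1) \neq (x_2,y_2)$ both lie on the smooth affine cubic $F = m$ and share an image $(X_0, Y_0)$; then in particular $H_1(x_1,y_1) = H_1(x_2,y_2) = X_0$, so the two points lie on a common conic $H_1 = X_0$, which meets the cubic $F = m$ in at most $6$ points by B\'ezout. Hence the preimage of each point of $E_{D'}(\ZZ)$ has size at most $6$. Equivalently, one can invoke the $\Aut(F)$-bijection of Section~\ref{BinaryCubic} applied to the rescaled triple $(F, x, y)$ with $F(x,y) = m$. I expect this bounded-to-one verification to be the only genuine obstacle.

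Combining these steps, the theorem gives $N_F(m) > c\log(m)^{17/19}$ for infinitely many $m$, so $N(E_{km^2}) \geq \tfrac{1}{6} N_F(m) > \tfrac{c}{6} \log(m)^{17/19}$. Since $k$ is fixed, $\log|km^2| = \log|k| + 2\log m \sim 2\log m$ as $m \to \infty$, so $\log(m)^{17/19}$ and $\log(|km^2|)^{17/19}$ agree up to a multiplicative constant, and the values $D = km^2$ form an infinite set of distinct integers. This produces an absolute constant $c' > 0$ with $N(E_D) > c' \log(|D|)^{17/19}$ for infinitely many $D$, as required. Beyond the bounded-fibre step, everything reduces to bookkeeping via the quadratic scaling $|D| \asymp m^2$, which preserves the exponent $17/19$.
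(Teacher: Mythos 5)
The paper gives no proof of this corollary at all --- it is stated as an immediate consequence of the preceding theorem and left to the reader --- so your proposal is supplying an argument where the paper has none. The route you take (push integer solutions of $F(x,y)=m$ through the covariant syzygy $4H_1^3 = G_1^2 + D_1F^2$ to land on $Y^2 = X^3 + km^2$ with $k = -D_1/4$, then bound the fibres) is exactly the mechanism the paper's Section~\ref{BinaryCubic} is built on, so it is almost certainly the intended deduction; your B\'ezout argument for the at-most-$6$-to-one claim is sound, since $F - m$ is irreducible for $m \neq 0$ and nonzero discriminant and hence shares no component with the conic $H_1 = X_0$, and the bookkeeping $\log|km^2| \sim 2\log m$ preserving the exponent $17/19$ is correct. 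Two points you should patch to make this airtight. First, the form $F(x,y) = x^3 - 3sx^2y - 4Dy^3$ from the previous theorem has \emph{rational} coefficients ($P=(s,t)$ is only a rational point), so you should run your argument on the content-$1$ integral form $F'$ produced in the paper's final lemma of that proof (the rescaling by $b = s_2/(3,s_2)$), which only changes $k$ by a fixed factor and does not affect the asymptotics. Second, you need $Y = G_1(x,y)/2$ to be an integer: this follows from the syzygy exactly as in the paper's Theorem~2.2, since $D_1 = -16Dt^2 \equiv 0 \pmod 4$ forces $G_1(x_0,y_0)$ even, but you should say so rather than fold it silently into the notation. With those two remarks inserted, your argument is a complete and correct proof of a statement the paper merely asserts.
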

This improves upon the bound $N(E) > c \log{(|D|)}^{11/13}$ found in {Solt and Janfada} \cite{lower}.

\section{Future Work}
In the future, we hope to generalize the Diophantine properties of Mordell equations found in this paper to the general family of elliptic curves in Weierstrass form. A natural extension of the work in this paper leads to the following question.
\begin{unnumberedqn}
    Find all elliptic curves $E : y^2 = x^3 + Ax + B$ where $(A,B) \in \mathbb{N}^2$ such that $N(E) = A + B.$
\end{unnumberedqn}
We note that the case when $A$ and $B$ have opposite signs and differ by at most $2$ is not particularly interesting, and therefore we restrict $A, B > 0$. We also note that {explicit} bound bashing for the general case of elliptic curves is more difficult than for the unique subset of Mordell curves. However, an encouraging preliminary search reveals two curves, $y^2 = x^3 + x + 3$ and $y^2 = x^3 + 3x + 5$ that have $4$ and $8$ solutions respectively. Moreover, we can aim to solve the generalized case for a subset of elliptic curves via a theorem of {Silverman} \cite{Silverman+1987+60+100}.
\begin{thm}
Let $E: y^2 = x^3 + Ax + B$ be a quasi-minimal elliptic curve, i.e. $\gcd(a^3, b^2)$ is $12$th power free, then if $\operatorname{rank}(E) = 1$ and $j$-invariant $j(E)=\frac{1728 \times 4A^3}{4A^3 + 27B^2} \in \mathbb{Z}$, then $N(E) \leq 3.28 \times 10^{33}.$ 
\end{thm}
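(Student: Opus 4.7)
The plan is to reconstruct this as a combination of three ingredients: the Mordell--Weil structure of $E(\QQ)$, bounds on the canonical height of integral points for curves with everywhere (potentially) good reduction, and an explicit gap principle in the one-dimensional Mordell--Weil lattice. The hypothesis that $j(E) \in \ZZ$ is the crucial geometric input, since it is classically equivalent (for a quasi-minimal model) to the statement that $E$ has potentially good reduction at every prime. This will control the non-archimedean contributions to the local height decomposition, which is exactly what is needed to bound integral points effectively.

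First I would fix a generator $P$ of the free part of $E(\QQ)$, so that every rational point can be written uniquely as $nP + T$ with $n \in \ZZ$ and $T \in E(\QQ)_{\text{tors}}$. By Mazur's theorem, $\#E(\QQ)_{\text{tors}} \le 16$, which immediately factors out a uniform multiplier in the counting. So it suffices to bound, for each fixed torsion class $T$, the number of $n \in \ZZ$ for which $nP + T \in E(\ZZ)$. The key tool here is the Néron--Tate canonical height $\hat{h}$, which satisfies $\hat{h}(nP + T) = n^2 \hat{h}(P)$, so if one can show that $\hat{h}$ of an integral point is bounded below by some $\kappa > 0$ and above by some expression that forces $|n|$ to be small, one obtains the desired finiteness with an explicit count.

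The main step, and the main technical difficulty, is the upper bound on $\hat{h}(Q)$ when $Q = (x_Q, y_Q) \in E(\ZZ)$. Decomposing $\hat{h}(Q) = \sum_v \hat{\lambda}_v(Q)$ into local canonical heights, the integrality of $x_Q$ forces the non-archimedean local heights to vanish or be small at primes of good reduction. Because $j(E) \in \ZZ$ and $E$ is quasi-minimal, no prime is multiplicative, so only primes dividing the minimal discriminant contribute, and at those primes the local height difference $\hat{\lambda}_p - \lambda_p$ is bounded by an explicit constant depending only on the Kodaira type. Standard bounds of Silverman relate $\hat{h}(Q)$ to $\tfrac{1}{2} h(x_Q) + O(1)$ with an explicit $O(1)$ in terms of the $j$-invariant and discriminant; for integral $x_Q$, the archimedean part $\log\max(1,|x_Q|)$ together with these local corrections yields $\hat{h}(Q) \le C_1 \log|\Delta_E| + C_2$ for explicit absolute constants $C_1, C_2$. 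The hard part is that these constants ultimately depend on the place-by-place analysis of the Néron model, and turning every step into a numerical inequality is where the large constant $3.28 \times 10^{33}$ is accumulated.

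Finally, the gap principle of Silverman forces any two distinct integral multiples $n_1 P, n_2 P$ (and likewise with torsion offsets) to satisfy $|n_1 - n_2| \ge \gamma$ for an explicit $\gamma$ depending on $\hat{h}(P)$, or else one derives a lower bound on $\hat{h}(P)$ in terms of $\log |\Delta_E|$ via Lang's conjecture (which is known explicitly for curves with integral $j$). Combining the upper bound $|n|^2 \hat{h}(P) \le C_1 \log|\Delta_E| + C_2$ with this lower bound on $\hat{h}(P)$ yields a bound on $|n|$ independent of $\Delta_E$, and multiplying by the torsion factor $16$ and by the count over all admissible residue classes gives the uniform bound $3.28 \times 10^{33}$. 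The principal obstacle is not conceptual but quantitative: each step (height difference, torsion bound, gap constant, Lang lower bound) must be tracked with explicit numerical constants, and optimizing their product requires the same delicate bookkeeping that appears in Silverman's original estimate.
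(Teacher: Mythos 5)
The paper itself offers no proof of this statement: it appears in the Future Work section and is quoted directly from Silverman's \emph{A quantitative version of Siegel's theorem}, so your reconstruction can only be measured against the argument in that reference. Several of your ingredients are indeed the right ones — the decomposition $Q = nP + T$ with $\#E(\QQ)_{\mathrm{tors}} \le 16$ by Mazur, the identity $\hat{h}(nP+T) = n^2\hat{h}(P)$, and the fact that integral $j$-invariant (everywhere potentially good reduction) is exactly what makes Lang's lower bound $\hat{h}(P) \ge c\log|\Delta_E|$ available unconditionally. That lower bound is genuinely the role the hypothesis $j(E)\in\ZZ$ plays in Silverman's proof.

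However, the central step of your third paragraph is a genuine gap: the inequality $\hat{h}(Q) \le C_1\log|\Delta_E| + C_2$ for an integral point $Q$ is not a theorem. It is essentially the Hall--Lang conjecture (an effective, polynomial-in-$\Delta$ form of Siegel's theorem), which remains open; integrality of $x_Q$ controls the finite local heights but says nothing that caps the archimedean term $\log\max(1,|x_Q|)$ in terms of $\Delta_E$. The unconditional bounds available (Baker-type) are exponential in powers of $\log|\Delta_E|$ and, combined with the Lang lower bound, would give a bound on $|n|$ that grows with $\Delta_E$ rather than the uniform constant claimed. Silverman's actual argument never bounds the height of an integral point: it splits the integral points into ``small'' and ``large'' ones relative to $\log|\Delta_E|$, counts the small ones by packing lattice points of norm at least $c\log|\Delta_E|$ into a ball of comparable radius, and counts the large ones by a gap principle (consecutive large integral points in a fixed coset have heights growing geometrically) together with a quantitative Diophantine approximation theorem (Evertse/Davenport--Roth type) that bounds the \emph{number} of such height-doublings by an absolute constant. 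Replacing your height upper bound with that small/large dichotomy is the missing idea; without it the argument does not close.
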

Additionally, work done by {Hajdu and Herendi} \cite{HAJDU1998361} gives us an explicit bound on the \emph{size} of integral points on an elliptic curve. Their main result is reproduced below. 
\begin{thm}
    Let $f(x) = x^3 + ax + b$ be a polynomial with coefficients in $\mathbb{Z}$ and with non-zero discrimnant $\Delta_f.$ Then, all solutions $(x,y) \in \mathbb{Z}^2$ of the equation $y^2 = x^3 + ax + b$ satisfy\[
\max\{|x|, |y|\} \leq \exp\left\{5 \cdot 10^{64} c_1 \log(c_1 + \log(c_2))\right\}
\]
\text{with}
\[
c_1 = \frac{32|\Delta f|^{1/2} \left(8 + \frac{1}{2} \log(|\Delta f|)\right)^4}{3}, \quad c_2 = 10^{4} \max\{16a^2, 256|\Delta f|^{2/3}\}.
\]
\end{thm}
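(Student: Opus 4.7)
The plan is to reduce the Mordell equation to a three-term $S$-unit equation in a cubic number field and then invoke an effective Baker-type bound on linear forms in logarithms of algebraic numbers. First, factor $f(x) = (x-\alpha_1)(x-\alpha_2)(x-\alpha_3)$ over its splitting field; the hypothesis $\Delta_f \neq 0$ ensures the roots are distinct. Set $K = \mathbb{Q}(\alpha_1)$, so that $y^2 = N_{K/\mathbb{Q}}(x-\alpha_1)$. Examining $v_\mathfrak{p}(x - \alpha_1)$ for each prime $\mathfrak{p}$ of $\mathcal{O}_K$ shows that as a fractional ideal $(x-\alpha_1) = \mathfrak{a}^2 \mathfrak{b}$, where $\mathfrak{b}$ is supported only on primes dividing the different $\mathfrak{D}_{K/\mathbb{Q}}$ and hence on rational primes dividing $\Delta_f$.

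Second, combine finiteness of the class group, Minkowski's bound, and Dirichlet's unit theorem to write $x - \alpha_i = \mu_i \varepsilon_i \beta_i^2$, where $\mu_i$ ranges over an explicit finite set of algebraic integers with heights bounded in terms of $|\Delta_f|^{1/2}$, where $\varepsilon_i$ ranges over coset representatives for $\mathcal{O}_K^\times / (\mathcal{O}_K^\times)^2$ of heights bounded in terms of the regulator $R_K$, and where $\beta_i \in \mathcal{O}_K$. The Siegel identity
\[
(\alpha_3 - \alpha_2)(x - \alpha_1) + (\alpha_1 - \alpha_3)(x - \alpha_2) + (\alpha_2 - \alpha_1)(x - \alpha_3) = 0
\]
then gives a three-term relation among $\mu_i \varepsilon_i \beta_i^2$. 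Dividing through by any one term converts this into an $S$-unit equation in the splitting field, with $S$ controlled by the primes dividing $\Delta_f$.

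Third, apply an explicit version of Baker's theorem (for instance Baker--W\"ustholz or Matveev) to the linear form in logarithms obtained from this $S$-unit equation. This yields an upper bound on the absolute logarithmic height $h(\beta_i)$ depending polynomially on $[K:\mathbb{Q}]$, $R_K$, $h_K$, and the size of the primes in $S$. Translating $h(\beta_1)$ back through $|x-\alpha_1| \leq |\mu_1\varepsilon_1|\,|\beta_1|^2$ produces a bound on $|x|$, and then the identity $|y| = \sqrt{|f(x)|}$ gives an identical bound on $|y|$ up to lower order terms.

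The principal obstacle is not the existence of \emph{some} effective bound — that follows directly from Baker's theorem — but forcing the final inequality into the specific shape with $c_1 = \tfrac{32|\Delta_f|^{1/2}(8 + \tfrac{1}{2}\log|\Delta_f|)^4}{3}$ and $c_2 = 10^4 \max(16a^2, 256|\Delta_f|^{2/3})$. Concretely one must (i) control Minkowski representatives and the class number $h_K$ by a constant times $|\Delta_f|^{1/2}$; (ii) bound $R_K$ together with the heights of a fundamental system of units by a power of $\log|\Delta_f|$, which is what produces the factor $(8 + \tfrac{1}{2}\log|\Delta_f|)^4$; (iii) bound the residue characteristics of primes in $S$ by $c_2$, using that such primes lie over divisors of $\Delta_f$ and that the relevant local estimates scale with $|a|$ and $|\Delta_f|^{2/3}$; and (iv) trace the universal constant $5 \cdot 10^{64}$ through the explicit Baker-type bound specialized to a degree-six extension (the splitting field of $f$). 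Optimizing (i)--(iv) simultaneously is the technical heart of Hajdu--Herendi's argument.
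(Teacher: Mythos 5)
The paper does not prove this statement at all: it appears in the ``Future Work'' section as a verbatim quotation of the main result of Hajdu and Herendi \cite{HAJDU1998361}, cited without proof. So there is no in-paper argument to compare yours against; the only meaningful comparison is with the strategy of the cited source. At that level your outline is essentially the right one and matches what Hajdu and Herendi actually do: factor $f$ over its splitting field, use the coprimality of the $(x-\alpha_i)$ away from primes dividing $\Delta_f$ to write $x-\alpha_i$ as a unit times a bounded element times a square, feed the Siegel identity into an $S$-unit equation, and close with an explicit Baker-type bound (in their case via explicit results of Bugeaud and Gy\H{o}ry on unit equations, together with Siegel-type bounds relating $h_K R_K$ to $|\Delta_f|^{1/2}(\log|\Delta_f|)^{d-1}$, which is where the $|\Delta_f|^{1/2}(8+\tfrac12\log|\Delta_f|)^4$ shape comes from).

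That said, as a proof of \emph{this} statement your proposal has a genuine gap, and you name it yourself: the theorem is nothing but the explicit constants $5\cdot10^{64}$, $c_1$, and $c_2$, and your items (i)--(iv) defer exactly the computations that produce them. A qualitative ``Baker's method gives an effective bound'' argument proves a different (weaker, ineffective-in-the-constants) theorem. There are also two smaller points worth tightening if you were to carry this out: the ideal identity $(x-\alpha_1)=\mathfrak{a}^2\mathfrak{b}$ should be justified by noting that for $\mathfrak{p}\nmid\Delta_f$ the ideals $(x-\alpha_i)$ are pairwise coprime in the splitting field and their product is the square $(y)^2$ (the different of $K/\mathbb{Q}$ is not quite the right object when $f$ is not monic-irreducible or when one works in $K=\mathbb{Q}(\alpha_1)$ rather than the splitting field); and the final step should bound $|y|$ from $|y|^2=|f(x)|\le |x|^3+|a||x|+|b|$ with $|a|,|b|$ themselves controlled by $c_2$ and $|\Delta_f|$, rather than asserting the bound on $|y|$ is ``identical up to lower order terms.'' None of this changes the verdict: correct blueprint, but the explicit content of the theorem is not established.
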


We also note a theorem of {Alpöge and Ho} that might prove useful in finding explicit bounds if we consider curves with rank $1.$
\begin{thm}
Fix $C=7^{2^7}$. Let $K$ be a number field, and let $\mathcal{O}_K$ denote its ring of integers. Let $A, B \in \mathcal{O}_K$ such that $\Delta_{A, B}:=-16\left(4 A^3+27 B^2\right) \neq 0$. Let $S$ be a finite set of places of $K$ containing all infinite places and all primes $\mathfrak{p}$ for which $\mathfrak{p}^2 \mid \Delta_{A, B}$, and let $\mathcal{O}_{K, S}$ denote the ring of $S$-integers in $K$, and let $\mathrm{Cl}(R)$ denote the class group of the ring $R$.

Let $\mathcal{E}_{A, B}: y^2=x^3+A x+B$ be an affine Weierstrass model of the elliptic curve $E_{A, B}$ over $K$. Then we have the bound
$$
\left|\mathcal{E}_{A, B}\left(\mathcal{O}_{K, S}\right)\right| \leq 2^{\operatorname{rank} E_{A, B}(K)} C^{2|S|+1}\left|\mathrm{Cl}\left(\mathcal{O}_{K, S}\right)[2]\right| .
$$
\end{thm}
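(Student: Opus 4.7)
The plan is to follow the 2-descent strategy of Bhargava--Shankar adapted to $S$-integral points over a general number field, as developed by Alpöge--Ho. The central parameterization associates to each $S$-integral point $P = (x,y) \in \mathcal{E}_{A,B}(\mathcal{O}_{K,S})$, via classical 2-descent on $E_{A,B}$, an orbit under $\PGL_2(\mathcal{O}_{K,S})$ of a binary quartic form $F_P$ with $S$-integral coefficients whose classical invariants $(I_{F_P}, J_{F_P})$ match those of $E_{A,B}$. Two integral points lying in the same coset of $2\, E_{A,B}(K)$ yield the same orbit, so the orbit count will control the point count up to a factor of at most $2^{\operatorname{rank} E_{A,B}(K)}$, with torsion contributions absorbed into the uniform constant $C$.

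First I would carry out the orbit-counting step. The relevant $\PGL_2(\mathcal{O}_{K,S})$-orbits inject into the 2-Selmer group $\mathrm{Sel}_2(E_{A,B}/K)$, which sits inside the fppf cohomology group $H^1(\mathcal{O}_{K,S}, E_{A,B}[2])$. A standard inflation-restriction analysis combined with the Kummer sequence for $E_{A,B}$ expresses this cohomology in terms of $\mathcal{O}_{K,S}^\times/(\mathcal{O}_{K,S}^\times)^2$ and $\Cl(\mathcal{O}_{K,S})[2]$. Dirichlet's $S$-unit theorem bounds the unit-group quotient by at most $2^{|S|}$ up to 2-torsion, the class-group factor appears directly as $|\Cl(\mathcal{O}_{K,S})[2]|$, and the remaining factors of $C^{|S|}$ enter through local Selmer conditions imposed at the finite primes in $S$. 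A single further factor of $C$ absorbs the archimedean and torsion corrections, producing the combined $C^{2|S|+1}$ term.

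Next I would bound the number of $S$-integral points that collapse onto a single orbit of binary quartic form. For a fixed $F$, the integral points on $\mathcal{E}_{A,B}$ arising from this orbit correspond to integral representations $F(u,v) = z^2$ with $(u,v,z) \in \mathcal{O}_{K,S}^3$; by reduction theory for quartics over $\mathcal{O}_{K,S}$, combined with Evertse-type bounds on the number of solutions to $S$-unit equations, the count of such representations is bounded by an absolute constant $C = 7^{2^7}$, independent of $K$, $A$, $B$, and $S$. Multiplying the orbit count by this per-orbit bound and by $2^{\operatorname{rank} E_{A,B}(K)}$ then yields the stated inequality.

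The main obstacle is the second step: obtaining the explicit, rank-independent constant $C = 7^{2^7}$. This is the technical heart of the Alpöge--Ho argument and requires reduction theory for binary quartic forms in a fundamental domain chosen uniformly across all number fields, together with a refined gap-principle that propagates any one integral solution into a forced configuration admitting only boundedly many companions. The exponent $2^7$ reflects iterated doublings inside this gap principle, and executing it uniformly in the $S$-integral setting over an arbitrary $K$ is the most delicate piece; by contrast, Steps 1 and 2 are formal consequences of Kummer theory and the $S$-unit theorem once the descent picture is set up.
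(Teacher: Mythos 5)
The paper does not prove this statement: it appears in the Future Work section as a verbatim quotation of a theorem of Alp\"oge and Ho, cited from their work and used only as a black box, so there is no in-paper argument to compare your proposal against. Judged on its own terms, your outline is a fair roadmap of the published Alp\"oge--Ho strategy (2-descent, integral binary quartic orbits with invariants $(I,J)$ matching $E_{A,B}$, an orbit count governed by $\Cl(\mathcal{O}_{K,S})[2]$ and the $S$-unit group via Dirichlet's theorem, and a per-orbit bound on representations), but it is a sketch rather than a proof.

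The genuine gap is the one you yourself identify and then set aside: every quantitative assertion in the theorem --- the absolute constant $7^{2^7}$, the exponent $2|S|+1$, and the uniformity in $K$, $A$, $B$ --- lives entirely in the steps you defer (reduction theory for quartics over $\mathcal{O}_{K,S}$ uniform in the field, and the iterated gap principle bounding points per orbit). Without executing those, nothing in the proposal pins down the stated bound as opposed to some unspecified $C^{f(|S|)}$. Two smaller imprecisions: the factor $2^{\operatorname{rank} E_{A,B}(K)}$ arises as the number of classes in $E(K)/2E(K)$ (each contributing one soluble rational orbit), not as a collapsing factor applied after the orbit count, and your accounting places all of the $|S|$-dependence in the orbit-counting step while making the per-orbit bound absolute; in the actual argument the places of $S$ also enter the per-orbit count, since the gap principle must be run at each place. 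If you intend this as a citation of Alp\"oge--Ho, as the paper does, say so explicitly; if you intend it as a proof, the second step must be carried out.
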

There are several weaker but simpler variants of the bound, by taking $S$ to be as small as possible, i.e., the union of the infinite places and the primes $\mathfrak{p}$ with $v_{\mathfrak{p}}\left(\Delta_{A, B}\right) \geq 2$, {Alpöge and Ho} obtain the following bound on integral points:
$$
\left|\mathcal{E}_{A, B}\left(\mathcal{O}_K\right)\right| \leq 2^{\operatorname{rank} E_{A, B}(K)} C^{2[K: \mathbb{Q}]+2 \omega \geq 2\left(\Delta_{A, B}\right)+1}\left|\mathrm{Cl}\left(\mathcal{O}_K\right)[2]\right|.
$$

\section{Acknowledgements}
The author would like to thank Dr. Simon Rubinstein Salzedo, Dr. Steven Joel Miller, Parth Chavan, Khyathi Komalan and Jiwu Jang for their valuable inputs. The author also like to thank the Spirit of Ramanujan (SOR) STEM Talent Initiative for funding his participation in Euler Circle, a math program for high school students.
\bibliographystyle{vancouver} 
\bibliography{main}

\end{document}